\documentclass[11pt]{article}
\pagestyle{headings}

\usepackage{calrsfs, amsmath, amscd, amsfonts, amssymb, amsthm, tikz, times, bbm, bm}
\usepackage[T1]{fontenc}
\textheight=21cm
\textwidth=16cm
\voffset=-1cm
\hoffset=-1,5cm
\parskip=4mm

\usepackage{listings}
\lstset{ 
  backgroundcolor=\color{white},   % choose the background color; you must add \usepackage{color} or \usepackage{xcolor}; should come as last argument
  basicstyle=\scriptsize\ttfamily,        % the size of the fonts that are used for the code
  breakatwhitespace=false,         % sets if automatic breaks should only happen at whitespace
%  breaklines=true,                 % sets automatic line breaking
  %captionpos=b,                    % sets the caption-position to bottom
  commentstyle=\itshape\color{mygreen},    % comment style
%  deletekeywords={insert,delete},            % if you want to delete keywords from the given language
  escapeinside={\%*}{*)},          % if you want to add LaTeX within your code
  extendedchars=true,              % lets you use non-ASCII characters; for 8-bits encodings only, does not work with UTF-8
%  firstnumber=1000,                % start line enumeration with line 1000
  frame=single,	                   % adds a frame around the code
  keepspaces=true,                 % keeps spaces in text, useful for keeping indentation of code (possibly needs columns=flexible)
  keywordstyle=\color{blue},       % keyword style
  language=C,                 % the language of the code
%  morekeywords={*,bool,integer},            % if you want to add more keywords to the set
%  numbers=left,                    % where to put the line-numbers; possible values are (none, left, right)
%  numbersep=5pt,                   % how far the line-numbers are from the code
%  numberstyle=\tiny\color{mygray}, % the style that is used for the line-numbers
%  rulecolor=\color{black},         % if not set, the frame-color may be changed on line-breaks within not-black text (e.g. comments (green here))
  showspaces=false,                % show spaces everywhere adding particular underscores; it overrides 'showstringspaces'
  showstringspaces=false,          % underline spaces within strings only
  showtabs=false,                  % show tabs within strings adding particular underscores
  stepnumber=2,                    % the step between two line-numbers. If it's 1, each line will be numbered
  stringstyle=\color{mymauve},     % string literal style
  tabsize=2,	                   % sets default tabsize to 2 spaces
  title=\lstname                   % show the filename of files included with \lstinputlisting; also try caption instead of title
}

\usepackage{authblk}
\usepackage{xy}
\xyoption{all}

%\independence
\newcommand\independent{\protect\mathpalette{\protect\independenT}{\perp}}
\def\independenT#1#2{\mathrel{\rlap{$#1#2$}\mkern2mu{#1#2}}}

\newtheorem{thm}{Theorem}[section]

\newtheorem{cor}[thm]{Corollary}
\newtheorem{lem}[thm]{Lemma}
\newtheorem{prop}[thm]{Proposition}
\theoremstyle{definition}
\newtheorem{defi}[thm]{Definition}
\newtheorem*{remark}{Remark}

\newtheorem{example}[thm]{Example}

%Leave above as is.

% my defs

\def\bi{\begin{itemize}}
\def\ei{\end{itemize}}
\def\aa{\alpha}
\def\bb{\beta}

\definecolor{drab}{rgb}{0.59,0.44,0.09}

\def\be{\begin{equation}}
\def\ee{\end{equation}}
\def\bea{\begin{eqnarray}}
\def\eea{\end{eqnarray}}

\def\({\left(}
\def\){\right)}
\def\[{\left[}
\def\]{\right]}

%newdefs

\def\d={\buildrel d \over =}

\def\dom{\mathit{Dom}}

% letter defs

\def\N{\mathbb{N}}

\def\Z{\mathbb{Z}}

\def \E{\mathbf{E}}
\def \P{\mathbf{P}}
\def \bS{\mathbf{S}}
\def \bU{\mathbf{U}}
\def \bW{\mathbf{W}}
\def \bX{\mathbf{X}}
\def \bY{\mathbf{Y}}

\def \AA{{\cal A}}

\def \FF{{\cal F}}
\def \GG{{\cal G}}

\def \TT{{\cal T}}

\def \XX{{\cal X}}
\def \YY{{\cal Y}}

% macros for comments.

% macro for commenting out
\newcommand{\commentout}[1]{}

%\addbibresource{bib.bib}
\begin{document}
\title{A de Finetti-type representation of joint hierarchically exchangeable arrays on DAGs}

\author{Jiho Lee}

\affil{Department of Mathematical Sciences, KAIST}

\maketitle

\begin{abstract}
 We define joint exchangeability on arrays indexed by a vector of natural numbers with coordinates being the vertices of directed acyclic graphs (DAGs) using local isomorphisms. The notion provides a new version of exchangeability, which is a joint version of hierarchical exchangeability defined in Jung, L., Staton, Yang (2020). We also prove the existence of a generic representation by independent uniform random variables.

\end{abstract}

\vspace{5mm}

Key words: Hierarchical exchangeability, DAG exchangeability, de Finetti-type representation, Aldous-Hoover, joint exchangeability

\section{Introduction}\label{sec: intro}
\textit{DAG exchangeability} is a notion of exchangeability on a family of indexed random elements $$\bX=(X_\aa : \aa \in \N^V)$$ on a Borel space $\XX$, where $G=(V,E)$ is a directed acyclic graphs (DAGs). DAG exchangeability was introduced by \cite{jung2020generalization} as a generalization of \textit{hierarchical exchangeability} in \cite{austin:panchenko:2014}. The main purpose of this paper is to extend \cite{jung2020generalization} to a wider class of exchangeable structures including \textit{jointly exchangeable arrays} using probabilistic methods. These methods were first deployed by David Aldous in \cite{aldous1981representations}. Later, Olav Kallenberg applied this method in a systematic way for more general results (see \cite{kallenberg1989representation} or \cite{kallenberg1992symmetries} for example). All of these results are organized in his textbook \cite{kallenberg2006probabilistic} which is a standard reference for fundamental results in exchangeability.

Our work is motivated by studies on Bayesian inference modeling, probabilistic programming, and neural networks as discussed in the introduction to \cite{jung2020generalization}. In fact, the original motivation and hope in that work was to obtain a representation for {\it jointly} DAG exchangeable arrays as opposed to the representation obtained there for {\it separately} DAG exchangeable arrays (precise definitions are given later). In this work, we close this gap by providing such a representation. Briefly, the idea is that de Finetti-type representations of hierarchically exchangeable structures can identify when a hierarchical generative model can be replaced by an equivalent one but with more explicit independence structure (see \cite{statonetal2017pps}). For general applications of exchangeability theory, one can also find in \cite{orbanz2014bayesian} a recent survey on various applications of exchangeability theory to Bayesian inference models including \cite{hoff2008modeling}, \cite{fortini2012predictive}, and \cite{lloyd2012random}. Structure theorems on exchangeable processes also provide canonical representations of neural networks with hierarchical symmetries. Readers can consult, for example, \cite{bloem2019probabilistic}, \cite{cohen2016group}, and \cite{bruna2013spectral} for applications in this direction.

Let $G=(V,E)$ be a DAG. We assume for the rest of the paper that $G$ is finite and simple. Also, when we write $G$ as a set, we refer to the set of vertices $V$. We write $v \prec w$ if there exists a directed nonempty path from $v$ to $w$. Note that $\prec$ defines a partial order in $G$. Conversely, given a finite partially ordered set $(G, \prec)$, we can build a corresponding set of directed edges $E$ by adding the edge $\overrightarrow{vw}$ if and only if $v \prec w$ and there is no $v' \in G$ such that $v \prec v' \prec w$. To make this correspondence bijective, we assume that $G$ always have the minimal set of edges under its induced partial order: that is, whenever there is a directed path from $v$ to $w$ that passes other vertices than $v$ and $w$, we have no edge from $v$ to $w$. 

We say that a subgraph $C$ of a DAG $G$ is \textbf{downward-closed} (or just closed) if it is downward-closed under the induced partial order, that is, $v \in C$ whenever there exists $w \in C$ such that $v \prec w$. We write $\AA_C$ for the collection of all closed subgraphs of $C$. For $\aa \in \N^G$ and $C \in \AA_G$, let $\aa|_C$ denote the restriction of $\aa$ to $C$ when viewing $\aa$ as a function from $G$ to $\N$.

\begin{defi}\label{def: sep DAG exch} Let $G$ be a DAG. Then, a $G$-\textbf{permutation}\footnote{Although in \cite{jung2020generalization} we used the word ``automorphism,'' we change the terminology in order to distinguish them with automorphisms of the DAG itself which appear in Section \ref{sec: joint DAG}.)} is a bijection $\tau : \N^G \to \N^G$ such that 
\begin{equation}\label{eq: G automorphism} \aa|_C = \bb|_C \Longleftrightarrow \tau(\aa)|_C = \tau(\bb)|_C
\end{equation}
for all $\aa, \bb \in \N^G$, $C \in \AA_G$. We write $ S_\N^{\rtimes G}$ for the collection of all $G$-permutations. A random array $\bX=(X_\aa : \aa \in \N^G)$ is \textbf{DAG-exchangeable} if for all $\tau \in S_\N^{\rtimes G}$, we have
\begin{equation}\label{eq: sep DAG exchangeability} \Big( X_\alpha : \alpha \in \N^G \Big) \buildrel d \over = \Big( X_{\tau(\alpha)} : \alpha \in \N^G \Big).
\end{equation}
\end{defi}

For $C \in \AA_G$, let $I_C := \underset{C' \in \AA_C}{\bigcup} \N^{C'}$. Given a $G$-permutation $\tau$ and $C \in \AA_G$, one can always define the action of $\tau$ on $\N^C$ by $\tau (\alpha) (v) = \tau (\beta) (v)$ for any $\beta \in \N^G$ such that $\beta|_C = \alpha$, since by the definition of $G$-permutations the choice of $\beta$ is irrelevant. Since this induced map on $\N^C$ is also a $C$-permutation, we can regard $\tau$ as a bijection from $I_G$ into itself, and identify it as a $G$-permutation of the index set $I_G$. Therefore, we can define DAG exchangeability on a random array indexed by $I_G$ instead of $\N^G$. For $\aa \in I_G$, let $\dom(\aa)$ be the domain of the function $\aa$, and $Restr(\aa):=\{\alpha|_C : C \in \AA_{\dom(\aa)}\}.$

For any DAG-exchangeable array, we have a canonical representation using independent uniform random variables, as long as the underlying probability space is rich enough. We will assume this condition for the rest of the paper.

\begin{thm}[\cite{jung2020generalization}]\label{thm: separate DAG, 1} Let $G$ be a DAG. Let $\bX=(X_\aa : \aa \in \N^G)$ be a DAG-exchangeable array taking values in a Borel space $\XX$. Then, there exist a measurable function $f:[0,1]^{\AA_G} \to \XX$ and an i.i.d. array $\bU=(U_\bb : \bb \in I_G)$ of uniform random variables such that
\begin{equation}\label{eq: Smain 1} X_\aa = f\Big(U_{\bb} : \bb \in Restr(\aa)\Big)
\end{equation}
almost surely for all $\aa \in \N^G$. 
\end{thm}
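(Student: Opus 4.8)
The plan is to prove this by induction on the structure of the DAG $G$, specifically on the number of vertices or more precisely on the height of the partial order $(G, \prec)$. The base case is when $G$ is a single vertex (or an antichain), where DAG exchangeability reduces to ordinary exchangeability of a sequence (or a jointly exchangeable array indexed by tuples), and the classical de Finetti / Aldous--Hoover representation applies directly. The inductive strategy is to peel off the minimal elements of the DAG and condition on the ``lower'' structure, using the observation already recorded in the excerpt that a $G$-permutation $\tau$ restricts to a $C$-permutation on each closed subgraph $C \in \AA_G$, so that the array inherits an exchangeability structure layer by layer along the filtration of closed subgraphs.

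First I would set up the conditioning carefully. For a fixed $\aa \in \N^G$, the index $\aa$ decomposes according to which values appear on each closed subgraph, and the family $Restr(\aa)$ encodes exactly the ``ancestral'' data that any $G$-permutation must preserve. The key structural fact to exploit is equation \eqref{eq: G automorphism}: two multi-indices agree on a closed subgraph $C$ if and only if their images under $\tau$ do. This means I can build the representing function $f$ hierarchically, attaching to each closed subgraph $C$ a layer of i.i.d.\ uniform seeds $U_\bb$ indexed by $\bb \in \N^C \subseteq I_G$, and reconstructing $X_\aa$ as a measurable function of the seeds corresponding to all restrictions of $\aa$. The heart of the argument is to show that after conditioning on the seeds attached to the proper closed subgraphs strictly below $G$, the residual randomness in $(X_\aa : \aa \in \N^G)$ is exchangeable in the ``top'' coordinate(s) in the ordinary sense, so that a coding lemma (Kallenberg's transfer/coding of exchangeable sequences by uniform randomization, e.g.\ as in \cite{kallenberg2006probabilistic}) produces the next layer of uniform seeds.

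The main technical device I expect to lean on is the standard coding lemma: an exchangeable sequence of random elements can be represented as a measurable function of a directing random measure (equivalently, an auxiliary i.i.d.\ uniform sequence) together with a single additional uniform randomization per index. I would apply this iteratively, once for each level of the DAG, making sure that the conditional exchangeability genuinely holds given the lower layers. Concretely, after fixing a maximal vertex $w$ (or the full set of maximal vertices), I would view $\bX$ as a process exchangeable in the $w$-coordinate conditionally on the sub-array indexed by $G \setminus \{w\}$, verify that the conditional law is invariant under permutations of that coordinate, extract the representation, and then recurse on $G \setminus \{w\}$, which is itself a DAG with the inherited partial order. Splicing the layers together yields a single function $f : [0,1]^{\AA_G} \to \XX$ of the form \eqref{eq: Smain 1}, where the argument coordinates are indexed by the closed subgraphs reached as restrictions of $\aa$.

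The hard part will be the bookkeeping that guarantees measurable consistency across the recursion: the seed $U_\bb$ for a given restriction $\bb$ must be reused coherently for \emph{every} $\aa$ whose restriction set contains $\bb$, rather than being regenerated independently at each step. In the separate (non-joint) setting of \cite{jung2020generalization} this is manageable because distinct closed subgraphs contribute independent blocks, but the delicate point is verifying that the conditional exchangeability at each level is not destroyed by the randomization introduced at earlier levels, i.e.\ that the coding lemma can be applied conditionally and the resulting seeds can be taken jointly independent across the whole index set $I_G$. I would handle this by a careful choice of the order in which vertices are peeled (a linear extension of $\prec$), by an appeal to the richness of the probability space to realize all the auxiliary uniforms on a common space, and by an inductive invariant asserting that at each stage the already-constructed seeds are i.i.d.\ uniform and independent of the not-yet-coded residual array.
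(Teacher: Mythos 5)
Your proposal has the right general flavor --- induction over the DAG combined with conditioning and coding lemmas, in the Aldous--Kallenberg probabilistic style --- and that is indeed the architecture of the paper, which obtains Theorem \ref{thm: separate DAG, 1} as the special case of Theorem \ref{thm:main} in which the CLIC $K$ consists only of identities. But there is a genuine gap at exactly the point you yourself flag as ``the hard part.'' Your plan is to condition on the ``sub-array indexed by $G \setminus \{w\}$'' (or on previously built seeds), check that the conditional law of the top layer is permutation-invariant, and then invoke a de Finetti-type coding. Two things go wrong. First, the conditioning objects are not right: $\bX$ is indexed by $\N^G$, so there is no sub-array indexed by $G\setminus\{w\}$; the correct intermediate objects are the symmetry random variables $S_\aa$, $\aa \in I_G$, which generate the invariant $\sigma$-fields $\FF_\aa$ of the group of $G$-permutations fixing $\aa$ (Proposition \ref{prop: existence of S joint}). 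These auxiliary variables must be constructed before any conditioning can even be formulated; your passing mention of a directing random measure is the germ of this idea, but the proposal never builds them. Second, and more seriously, invariance of the conditional law under permutations of one coordinate is far weaker than what the induction needs: one must prove that the variables attached to \emph{distinct indices} are conditionally independent given the symmetry variables of their pairwise intersections, i.e.\ Proposition \ref{prop:keytool} (the counterpart of Proposition 4.1 of \cite{jung2020generalization}). This conditional independence is the actual content of any Aldous--Hoover-type theorem; it does not follow softly from exchangeability, and it is precisely what your ``inductive invariant'' (that earlier seeds remain i.i.d.\ and independent of the not-yet-coded residual array) presupposes but cannot establish on its own.

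The paper supplies this missing step with a concrete mechanism absent from your outline: the shifting injections $\rho_\aa$, which fix every index whose restrictions agree with those of $\aa$ and push all other coordinates strictly upward; the tail identity $\FF_\aa = \bigcap_{n \geq 1} \FF_\aa^n$ of Lemma \ref{lem: tail coding}; and a backward martingale convergence argument combined with Lemma \ref{lem: inclusive conditional indep} and Lemma \ref{lem: tail emersion}. (In \cite{jung2020generalization} the corresponding step is imported from Hoover's work and rests on nonstandard analysis; either way it is a substantial theorem, not bookkeeping.) Once Proposition \ref{prop:keytool} is in hand, the rest of your sketch --- representing each new layer via Lemma \ref{lem: conditional independence randomization}, matching laws with Lemma \ref{lem: identification}, and splicing layers together with the transfer lemma --- does correspond to the paper's proofs of Theorem \ref{thm:main2} and Theorem \ref{thm:main}, with the caveat that the paper inducts on the size $|\dom(\aa)|$ of closed subgraphs rather than peeling one maximal vertex at a time (and note that for an antichain the base case is a \emph{separately}, not jointly, exchangeable array). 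So the scaffolding is right, but as written the proposal assumes away the core difficulty of the theorem.
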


\begin{example}\label{example: past} This setup covers the following past results on the representations of exchangeable structures by independent uniform random variables.
\begin{itemize}
    \item [(a)] {\it Exchangeable sequences:} Let $G$ be a graph with a single vertex. Then, $S_\N^{\rtimes G}$ is simply the group of all bijections from $\N$ to itself. So, a DAG-exchangeable array is merely an exchangeable sequence. Theorem \ref{thm: separate DAG, 1} implies that for an exchangeable sequence $\bX=(X_n : n \in \N)$, there exist an i.i.d. sequence of uniform random variables $(U_0, U_1, U_2,...)$ and a measurable function $f:[0,1]^2 \to \XX$ such that
    $$ X_n = f(U_0, U_n)$$
    almost surely for all $n \in \N$. This is a variant of de Finetti's theorem (\cite{de1929funzione}, \cite{de1937prevision}, \cite{hewitt1955symmetric}) proposed by \cite{aldous1981representations}.
    \item [(b)] {\it Separately exchangeable arrays:} Let $G=(\{1,2\}, \emptyset).$ Then, $\N^G=\N^2$ and $S_\N^{\rtimes G}$ is isomorphic to $(S_\N)^2$, acting naturally on $\N^2$. Thus, a DAG-exchangeable array is a separately exchangeable array of dimension $2$, that is, it satisfies the distributional equation
    \begin{equation}\label{eq: separate AH} (X_{ij} : i,j \in \N) \buildrel d \over = (X_{\tau(i)\rho(j)} : i,j \in \N).
\end{equation}
    It is guaranteed by either Theorem \ref{thm: separate DAG, 1} or the  Aldous-Hoover theorem (\cite{aldous1981representations}, \cite{hoover1979relations}) that there exist an i.i.d. array of uniform random variables $\bU=(U_{ij} : i,j \geq 0)$ and a measurable function $f: [0,1]^4 \to \XX$ such that
    \begin{equation}\label{eq: sep exch rep}
    X_{ij}=f(U_{00}, U_{i0}, U_{0j}, U_{ij})    
    \end{equation}
    almost surely for all $i,j \in \N$. The result can be extended to arrays of higher dimensions. (See \cite{kallenberg2006probabilistic} for a deep analysis on exchangeable arrays of high dimensions.)
    \item [(c)]{\it Hierarchically exchangeable arrays:} Let $G=(\{v_1, v_2, u_1, u_2\}, E)$ where $E=\{\overrightarrow{v_1 v_2},\overrightarrow{u_1 u_2}\}$. Then, a DAG-exchangeable array is an example of hierarchical exchangeability introduced by \cite{austin:panchenko:2014}, which can be written in the form $(X_{ij,k\ell}: i,j,k,\ell \in \N)$ where $i,j,k,\ell$ are the coordinates on $v_1, v_2, u_1, u_2$, respectively. If an array $\bX$ indexed by $\N^G$ is DAG-exchangeable, then for $\tau, \rho \in S_\N$ and $\tau_i, \rho_k \in S_\N$ for each $i,k \in \N$, we have
    $$(X_{ij,k\ell} : i,j,k,\ell \in \N) \buildrel d \over = (X_{\tau(i)\tau_i(j),\rho(k)\rho_k(\ell)} : i,j,k,\ell \in \N).$$
    The representation theorem by \cite{austin:panchenko:2014} allows us to have
    $$X_{ij,k\ell} = f(U_{00,00},U_{i0,00},U_{ij,00},U_{00,k0},U_{i0,k0},U_{ij,k0},U_{00,k\ell},U_{i0,k\ell}, U_{ij,k\ell}).$$
    almost surely for some measurable function $f$ and some i.i.d. array $\bU$ of uniform random variables.
\end{itemize}
\end{example}

\begin{figure}[ht]
	\centering
	\includegraphics[height=2.4in]{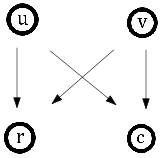}
	\caption{The DAG for Example \ref{example: block matrix}.}
	\label{fig:DAG:random-block-matrices}
\end{figure}

\begin{example}\label{example: block matrix} We introduce \textit{random block matrices} from Example 2.2 of \cite{jung2020generalization}, which is a new example covered by Theorem \ref{thm: separate DAG, 1}. Let $V=\{u,v,r,c\}$, $E=\{\overrightarrow{ur},\overrightarrow{uc},\overrightarrow{vr},\overrightarrow{vc} \}.$ (See Figure \ref{fig:DAG:random-block-matrices}.) An array $(X_{ij,k\ell} : i,j,k,l \in \N)$ is DAG-exchangeable (regarding $i,j,k,\ell$ as coordinates on $u,v,r,c,$ respectively) if for all $\tau, \rho \in S_\N$ and $\tau_{ij}, \rho_{ij} \in S_\N$ with $i,j \in \N$, we have
\begin{equation}\label{eq: random BM sep} (X_{ij,k\ell} : i,j,k,l \in \N) \buildrel d \over = (X_{\tau(i)\rho(j),\tau_{ij}(k)\rho_{ij}(\ell)} : i,j,k,l \in \N).
\end{equation}

By Theorem \ref{thm: separate DAG, 1}, there exist a measurable function $f$ and an i.i.d. array of uniform random variables $\bU$ such that for all $i,j,k,\ell \in \N$, we have
\begin{equation}\label{eq: random BM sep representation} X_{ij,k\ell}=f(U_{00,00},U_{i0,00},U_{0j,00},U_{ij,00},U_{ij,k0},U_{ij,0\ell},U_{ij,k\ell})
\end{equation}
almost surely.
\end{example}
To motivate the main objective of this paper, let us revisit (b) of Example \ref{example: past}. Let us consider the case where the array is jointly exchangeable, that is, \begin{equation}\label{eq: joint AH} (X_{ij} : i,j \in \N) \buildrel d \over = (X_{\tau(i)\tau(j)} : i,j \in \N)
\end{equation}
for all $\tau \in S_\N$. This is weaker than separate exchangeability, where we can choose permutations on the two coordinates separately. Jointly exchangeable arrays of dimension two are, in particular, closely connected to random graph theory. We recommend \cite{diaconis2008graph} or \cite{austin2008exchangeable} as an introduction towards this direction.

For jointly exchangeable arrays, we have a representation of the form
\begin{equation}\label{eq: joint AH rep} 
X_{ij}=f(U_0, U_i, U_j, U_{\{i,j\}})
\end{equation}
almost surely for $i \neq j$ (\cite{hoover1979relations}). One can see that, compared to \eqref{eq: sep exch rep}, the indices on the rows are merged with those on the columns. We can naturally ask if the similar merging occurs on joint versions of DAG-exchangeable arrays. That is, if a random array $\bX=(X_{ij,k\ell} : i,j,k,\ell \in \N)$, for instance, satisfies the distributional equation
\begin{equation}\label{eq: BM joint 1} (X_{ij,k\ell} : i,j,k,\ell \in \N) \buildrel d \over = (X_{\tau(i)\tau(j),\tau_{\{i,j\}}(k)\rho_{\{i,j\}}(\ell)} : i,j,k,\ell \in \N)
\end{equation}
for all $\tau, \tau_{\{i,j\}},\rho_{\{i,j\}} \in S_\N$, we can ask whether we have a representation of the form
\begin{equation}\label{eq: BM joint 1 rep}
X_{ij,k\ell}=f(U_{0,00},U_{i,00},U_{j,00}, U_{\{i,j\},00}, U_{\{i,j\},k0}, U_{\{i,j\},0\ell}, U_{\{i,j\},k\ell})
\end{equation}
almost surely for $i \neq j$.

The main objective of this paper is to extend the representation given by Theorem \ref{thm: separate DAG, 1} to a wider class of exchangeable structures. This new model includes Hoover's joint exchangeable arrays, the representation \eqref{eq: BM joint 1 rep}, and exchangeable arrays associated to arbitrary DAGs with merging of the vertices in the sense described above. We will rigorously define the model in the next section with more examples.

\section{Settings and Main Results}\label{sec: joint DAG}
\subsection{Main Results}
Let $Aut(G)$ denote the directed graph automorphism group of $G$, and let $K$ be a subgroup of $Aut(G)$. Define a left group action of $Aut(G)$ acting on $I_G$ by
$$ \kappa \beta (v) = \beta (\kappa^{-1}(v)), \ \kappa \in Aut(G), \ \beta \in I_G.$$

Since elements in both $Aut(G)$ and $S_\N^{\rtimes G}$ acts as bijections from $I_G$ to itself, we can composite them as functions. For $\kappa \in Aut(G)$ and $\tau \in S_\N^{\rtimes G}$, we write $\kappa \tau$, $\tau \kappa$ to denote composite functions $\kappa \circ \tau$, $\tau \circ \kappa : I_G \to I_G$, respectively. 

As we can see in Example \ref{example: past}, in the case of a separately exchangeable array of dimension $d$, we can regard $G$ as a graph of order $d$ with no edges. An array $\bX = (X_\alpha : \alpha \in \N^d)$ is separately exchangeable if and only if 
\begin{equation}\label{eq: AH dimension d}
    (X_\alpha : \alpha \in \N^d) \buildrel d \over = (X_{\tau \alpha} : \alpha \in \N^d)
\end{equation} 
for all $\tau \in S_\N^d$. 

On the other hand, the joint exchangeability can be described in terms of commutativity with the automorphism group $Aut(G)$, which is isomorphic to $S_d$. The array $\bX$ is jointly exchangeable, or equivalently,
\begin{equation}\label{eq: joint AH high dimension}
(X_{n_1 n_2 \cdots n_d} : n_1,...,n_d \in \N) \buildrel d \over = (X_{\sigma(n_1) \sigma(n_2) \cdots \sigma(n_d)} : n_1,...,n_d \in \N)
\end{equation}
for all $\sigma \in S_\N$ if and only if \eqref{eq: AH dimension d} holds for all $\tau \in S_\N^d$ satisfying $\tau \kappa = \kappa \tau$ for any $\kappa \in Aut(G)$.

Furthermore, if we require that $\tau$ commutes with $\kappa \in K$ for subgroups $K$ of $S_d$ instead of the whole $S_d$, we obtain a different notion of exchangeability lying between separate and joint exchangeability. For example, if we let $d=3$ and $K$ be the subgroup of $S_3$ generated by the transposition $(2 \ 3)$,  we have
$$(X_{ijk} : i,j,k \in \N^3) \buildrel d \over = (X_{\tau(i)\sigma(j)\sigma(k)}: i,j,k \in \N^3)$$
for all $\tau, \sigma \in S_\N$.
\begin{figure}
    \centering
    \includegraphics[width=70mm,height=50mm]{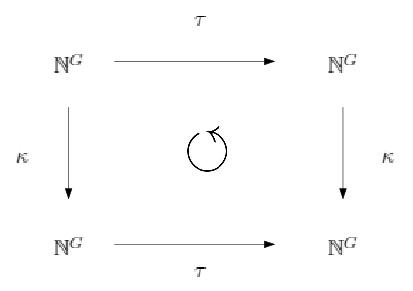}
    \caption{A diagram associated to permutations commuting with graph automorphisms. $\tau \in S_\N^{\rtimes G}$ commutes with all $\kappa \in K \subseteq Aut(G)$.}
    \label{fig:CD1}
\end{figure}

Considering the above observations, it is tempting to define joint exchangeability on random arrays defined on DAGs by assigning a subgroup $K$ of $Aut(G)$ and allowing law-invariance for permutations which commute with $K$ (see Figure \ref{fig:CD1}). However, there are a few issues we have to handle. One is that the exchangeability structure does not uniquely determine the group $K$.

\begin{example}\label{example: nonuniqueness} let $G=\{v_1, v_2, v_3\}$ be a graph with no edges, and let $K=\Z/3\Z$ acting naturally on $G$. Then, since $K$ acts transitively on $G$, a permutation $\tau=(\tau_1,\tau_2,\tau_3) \in S_\N^3$ of $\N^G$ commutes with $K$ if and only if $\tau_1=\tau_2=\tau_3$. So we obtain the same exchangeability structure in this setting if we choose either $K=\Z/3\Z$ or $K=S_3$. 
\end{example}

The other issue is more serious. Many of the proofs of representation theorems on exchangeable arrays use induction on the dimension of the arrays, and we will also follow this strategy. However, by restricting $K$ to be a subgroup of $Aut(G)$, we encounter an issue when deploying this type of induction, as the following example shows.

\begin{example}\label{example: bad restriction} Let $V=\{u_1,u_2,v_1,v_2\}$, $E=\{\overrightarrow{u_1 u_2}, \overrightarrow{v_1 v_2}  \}$ (Example \ref{example: past}, (c)). Then, $Aut(G)$ is a group of order two, where the nonidentity element exchanges $u_i$ and $v_i$ for $i=1,2$ respectively. However, the closed subgraph $C=\{u_1, u_2, v_1\}$ has a trivial automorphism group. If we assign joint exchangeability on a random array $\bX = (X_\alpha : \alpha \in \N^G)$ associated to $K=Aut(G)$, the permutations in consideration should act identically on vertices $u_1$ and $v_1$. However, there is no way to assign such a class of exchangeability on random arrays defined on the subgraph $C$ via its automorphism group, since it has no nontrivial graph automorphism at all.
\end{example}

Both of these issues arise from the nature that the class of permutations that commute with $K$ is determined only by the \textit{local} behavior of $K$ in the following sense. Let $Z_K(S_\N^{\rtimes G})$ denote the group of $G$-permutations that commute with $K$. Let $C_v$ denote the closure of $\{v\}$, i.e. the smallest closed subgraph of $G$ containing $v$. Then, $Z_K(S_\N^{\rtimes G})$ consists of all $\tau \in S_\N^{\rtimes G}$ such that $\tau\kappa (\alpha)= \kappa \tau (\alpha)$ for all $v \in G$, $\kappa \in K$, $\alpha \in \N^{C_v}$. In other words, the only relevant information from $K$ is its action on ``local'' indices, that is, elements in $I_G$ whose domain is of the form $C_v$.

Now, in order to handle the above issues from the examples, instead of a subgroup of $Aut(G)$, we will use a collection of mappings which takes into account the local nature required by joint exchangeability. We will continue to use $K$ to denote such a collection. These mappings are not necessarily defined on the whole of $G$, but only on a specific vertex and its closure. We use the word \textbf{isomorphism} to describe a bijective function from a closed subgraph of $G$ to another closed subgraph that preserves directed edges. 

\begin{defi}\label{def: CLIC} Let $G$ be a DAG. A \textbf{local isomorphism} of $G$ is a sub-DAG isomorphism of the form $$\kappa : C_v \to C_w$$ for some $v, w \in G$. 

A collection $K$ of local isomorphisms is called a \textbf{consistent local isomorphism class (CLIC)} of $G$ if
\begin{itemize}
    \item $K$ contains all the identity mappings and is closed under inversion, composition, and restrictions to subgraphs of the form $C_u$. 
    \item If $\kappa$ is a local isomorphism such that for each $v \in \dom(\kappa)$ we have $\kappa' \in K$ such that $\kappa'(v)=\kappa(v)$, then $\kappa \in K$.
\end{itemize}
\end{defi}

\begin{example}\label{example: CLIC} Let us go back to the random block matrices in Example \ref{example: block matrix}. The following are the list of all the local isomorphisms of $G$:

\begin{itemize}
    \item $\kappa_1 : \{u\} \to \{v\}$.
    \item $\kappa_{01} : \{u, v, r\} \to \{u, v, c\}$ where $\kappa_{01}(u)=u$, $\kappa_{01}(v)=v$, and $\kappa_{01}(r)=c$.
    \item $\kappa_{11} : \{u, v, r\} \to \{u, v, c\}$ where $\kappa_{11}(u)=v$, $\kappa_{11}(v)=u$, and $\kappa_{11}(r)=c$.
    \item $\kappa_{1r} : \{u,v,r\} \to \{u,v,r\}$ where $\kappa_{1r}(u)=v$, $\kappa_{1r}(v)=u$, and $\kappa_{1r}(r)=r$.
    \item $\kappa_{1c} : \{u,v,c\} \to \{u,v,c\}$ where $\kappa_{1c}(u)=v$, $\kappa_{1c}(v)=u$, and $\kappa_{1c}(c)=c$.
    \item Inverses of the above maps
    \item Identities
\end{itemize}

The following are all the possible lists of members a CLIC can have, where the identities and the inverses are omitted:
\begin{enumerate}
    \item $\kappa_1, \kappa_{1r}, \kappa_{1c}$
    \item $\kappa_{01}$
    \item $\kappa_1, \kappa_{1r}, \kappa_{1c},  \kappa_{01}, \kappa_{11}$ 
\end{enumerate}

For another example, let us consider $G=(V,E)$ with $V=\{u_1,u_2,v_1,v_2,v_3\}$ with edges $E=\{\overrightarrow{u_1 u_2}, \overrightarrow{v_1 v_2}, \overrightarrow{v_2 v_3}\}$. It corresponds to Austin and Panchenko's setting with $r=2$, $d_1=2$, $d_2=3$ (See \cite{austin:panchenko:2014}). Although there is no nontrivial automorphism of $G$, we have the following nontrivial local isomorphisms along with their inverses:
\begin{itemize}
    \item $\rho_1 : \{u_1\} \to \{v_1\}$.
    \item $\rho_2 : \{u_1, u_2\} \to \{v_1,v_2\}$ where $\rho_2(u_i)=v_i$.
\end{itemize}

The following are all the possible lists of members a CLIC can have, where again the identities and the inverses are omitted:
\begin{enumerate}
    \item[4.] $\rho_1 $ 
    \item[5.] $\rho_1, \rho_2$ 
\end{enumerate}
\end{example}

\begin{remark} A local isomorphism need not be extendable to an automorphism. For instance, in the case of Example \ref{example: bad restriction}, the cause of the second issue is that the local isomorphism $\kappa : \{u_1\} \to \{v_1\}$ cannot be extended to an automorphism of $C$. Let $K=\{id, \kappa, \kappa^{-1}, \rho, \rho^{-1}\}$, where $\rho : \{u_1, u_2\} \to \{v_1, v_2\} $ with $\rho(u_i)=v_i$ for $i=1,2$. Then, the (jointly) exchangeable random array associated to the automorphism group of $G$ is law-invariant under the permutations that commute ``locally'' with $K$. Unlike the case using automorphisms, the induced symmetry on the subgraph $C$ is well-described by just taking the elements in $K$ which are defined inside $C$, which are $id, \ \kappa,$ and $\kappa^{-1}$.
\end{remark}

 Let $K$ be a CLIC. For $v \in G$, let $K_v$ be the collection of $\kappa \in K$ such that $C_v \subseteq \dom(\kappa)$. We say that two vertices $v,w \in G$ are \textbf{equivalent} under $K$ if there exists $\kappa \in K_v$ such that $\kappa(v)=w$, and denote this relation by $v \buildrel K \over \sim w$. 
 
 We can define a similar equivalence in $I_G$ as well. Given $v \in G$, $\kappa \in K_v$ and $\aa \in \N^{C_v}$, define $\kappa (\aa) \in \N^{C_{\kappa(v)}}$ as
 $$ \kappa (\aa) (u) := \aa( \kappa^{-1} u )$$
 for $u \in C_{\kappa(v)}$. We say that two indices $\alpha, \beta \in I_G$ are \textbf{equivalent} under $K$ if there exists a bijection $\phi: \dom(\alpha) \to \dom(\beta)$ such that for each $v \in \dom(\alpha)$, there exists $\kappa \in K_v $ such that $\kappa(\alpha|_{C_v})=\beta|_{C_{\phi(v)}}$. We also write $\alpha \buildrel K \over \sim \beta$ for this relation. It is easy to check that both the relations on $G$ and $I_G$ denoted by the symbol $\buildrel K \over \sim$ are equivalence relations.

It is convenient to have our index set to be transitive under the group action. Thus, instead of $\N^G$, we restrict our index set to $${\N}_K^G:=\{\alpha \in \N^G : \kappa(\alpha|_{C_v}) \neq \alpha|_{C_{\kappa(v)}}  \text{ for all } v \in G, \ \kappa \in K_v \text{ such that } \kappa|_{C_v} \neq id_{C_v}\}.$$ Let us also write $$I_K^G:=\underset{C \in \AA_G}{\bigcup} \N_K^C.$$

Roughly speaking, a consistent isomorphism class $K$ is an indicator that restricts the permutations of interest to act identically on vertices that are equivalent under $\buildrel K \over \sim$. The inclusion of identities and taking closure under inversion, composition and restriction, has ensured that $\buildrel K \over \sim$ is an equivalence relation in both $V$ and $I_K^G$.

Now we are ready to define joint DAG-exchangeability and state the main theorem. 

\begin{defi}\label{def: joint DAG exc} Let $G$ be a DAG and $K$ a CLIC of $G$. A permutation $\tau \in S_\N^{\rtimes G}$ is said to be $K$\textbf{-commuting} if for all $\beta \in \N_K^{C_v}$ with $v \in G$ and $\kappa \in K_v$, we have
\begin{equation}\label{eq: commutativity with K}
\tau \circ \kappa(\beta)=\kappa \circ \tau(\beta).
\end{equation}

An array $\bX=(X_\alpha : \alpha \in \N_K^G)$ (or $I_K^G$) is $(G, K)$\textbf{-exchangeable} if
\begin{equation}\label{eq: joint DAG exchangeability}
(X_\alpha : \alpha \in \N_K^G) \buildrel d \over = (X_{\tau\alpha} : \alpha \in \N_K^G)
\end{equation}
for all $K$-commuting $\tau$. 
\end{defi}
\begin{figure}
    \centering
    \includegraphics[width=70mm,height=50mm]{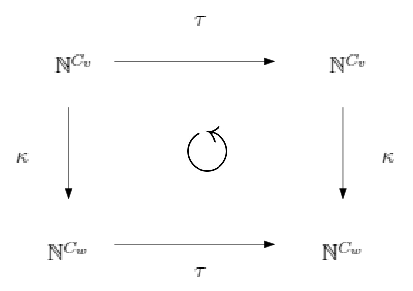}
    \caption{A commutative diagram associated to Definition \ref{def: joint DAG exc}.}
    \label{fig:CD2}
\end{figure}

We will keep using the notation $Z_K(S_\N^{\rtimes G})$ for the collection of all $K$-commuting permutations. Note that Definition \ref{def: sep DAG exch} is a special case of Definition \ref{def: joint DAG exc} where $K$ consists only of identity mappings. Also, thanks to the second condition in Definition \ref{def: CLIC}, for any two CLIC's $K_1$ and $K_2$ on a DAG $G$, we have $K_1=K_2$ whenever $Z_{K_1}(S_\N^{\rtimes G})=Z_{K_2}(S_\N^{\rtimes G})$. 

\begin{thm}\label{thm:main} Let $G$ be a finite DAG, $K$ a CLIC of $G$. Then, an array $\bX=(X_\alpha : \alpha \in \N_K^G)$ is $(G,K)$-exchangeable if and only if there exists a measurable function $f: [0,1]^{\AA_G} \to \XX$ such that for all $\alpha \in {\N}_K^G$,
\begin{equation}\label{eq: main1}
X_\alpha \buildrel a.s. \over = f \big( U_{[\alpha|_C]_K} : C \in \AA_G \big)
\end{equation}
for some array $\bU$ of i.i.d. uniform random variables indexed by $([\beta]_K : \beta \in I_K^G)$, where $[\beta]_K$ denotes the equivalence class of $\beta$ with respect to $\buildrel K \over \sim$. 
\end{thm}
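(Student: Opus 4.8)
The plan is to prove both implications, the direction ``representation $\Rightarrow$ exchangeability'' being a direct algebraic verification and the converse being the substantive part, which I would handle by induction on the number of vertices of $G$. For the \emph{if} direction, suppose $X_\alpha \buildrel a.s. \over = f(U_{[\alpha|_C]_K} : C \in \AA_G)$ and let $\tau$ be $K$-commuting. I would show that $[\alpha|_C]_K \mapsto [(\tau\alpha)|_C]_K$ is a well-defined bijection of the index set $\{[\beta]_K : \beta \in I_K^G\}$: well-definedness uses that $\tau$ respects the relation $\buildrel K \over \sim$, which follows from \eqref{eq: commutativity with K} together with the CLIC axioms, and bijectivity uses that $\tau$ is a bijection of $I_K^G$ whose inverse is again $K$-commuting. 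Since $\bU$ is i.i.d., relabelling its index set by this bijection preserves its law, and composing with the fixed $f$ gives $(X_{\tau\alpha}) \buildrel d \over = (X_\alpha)$, i.e. \eqref{eq: joint DAG exchangeability}.

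For the \emph{only if} direction I would induct on $|V|$. The base case $|V|=1$ forces $K$ to consist of the identity alone, so $\bX$ is an exchangeable sequence and the claim reduces to the Aldous--de Finetti representation of Example \ref{example: past}(a). For the inductive step, fix a maximal vertex $v$ of $(G,\prec)$ and put $G'=G\setminus\{v\}$; since $v$ is maximal, $G'$ is a closed subgraph, and $K'=\{\kappa\in K : \dom(\kappa)\cup\mathrm{ran}(\kappa)\subseteq G'\}$ is readily checked to satisfy the two CLIC axioms of Definition \ref{def: CLIC} on $G'$. Restricting indices shows $\bX'=(X_\alpha : \alpha\in \N_{K'}^{G'})$ is $(G',K')$-exchangeable, so the inductive hypothesis supplies i.i.d. uniforms $(U_{[\beta]_K})$ ranging over classes with domain in $\AA_{G'}$, together with a function representing $\bX'$. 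What remains is to produce the top-level uniforms $U_{[\alpha|_{C_v}]_K}$ and to fold them, together with the lower ones, into a single $f:[0,1]^{\AA_G}\to\XX$.

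The core of the argument is to analyze, conditionally on the lower randomness, the family obtained by varying the $v$-coordinate and, simultaneously, the coordinates of all vertices equivalent to $v$ under $\buildrel K \over \sim$ (these must move in lockstep, since a $K$-commuting permutation is constrained on them by \eqref{eq: commutativity with K}). Using the $K$-commuting permutations that fix every coordinate outside the $\buildrel K \over \sim$-class of $v$, I would show that, conditionally on $\bX'$ and the already-constructed lower uniforms, this family is an exchangeable (indeed jointly exchangeable, when the class of $v$ has several members) array in the new coordinates. A conditional form of de Finetti / Aldous--Hoover then introduces a fresh layer of i.i.d. uniforms $U_{[\alpha|_{C_v}]_K}$, one per $\buildrel K \over \sim$-class of top indices; Kallenberg's coding lemma realizes the relevant conditional laws as a measurable function of the lower uniforms and these fresh ones, and an almost-sure uniqueness (martingale) argument identifies the function, assembled across all levels $C\in\AA_G$, with a single $f$ satisfying \eqref{eq: main1}.

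I expect the main obstacle to be precisely this top-level step: one must verify that the conditional symmetry among the top coordinates is genuinely exchangeable \emph{in the form dictated by $K$}, so that the fresh uniforms are correctly indexed by $\buildrel K \over \sim$-classes rather than by raw indices, and that the diagonal-avoidance built into $\N_K^G$ (the analogue of the condition $i\neq j$ in Hoover's representation \eqref{eq: joint AH rep}) is exactly what makes the conditional family fully exchangeable rather than merely partially so. A further subtlety is that a vertex $w$ with $v\buildrel K \over \sim w$ need not itself be maximal, so the $\buildrel K \over \sim$-class of $v$ may not be confined to the top level; handling this requires peeling the class coherently and checking that $K'$ and the induced equivalence on $I_{K'}^{G'}$ agree with the restrictions of those on $I_K^G$, so that the inductively produced uniforms are consistent with the newly added layer.
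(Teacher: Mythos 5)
Your ``if'' direction is fine, but the inductive scheme you propose for the converse has two gaps, one of which is structural rather than merely technical. First, the induction cannot be set up as stated: $\bX$ has components only for indices $\alpha \in \N_K^G$, i.e.\ functions defined on \emph{all} of $G$, so there is no sub-array $(X_\alpha : \alpha \in \N_{K'}^{G'})$ to which an inductive hypothesis could be applied --- ``restricting indices'' restricts to an empty collection of random variables. Supplying objects indexed by proper closed subgraphs is precisely the role of the symmetry array $\bS = (S_\alpha : \alpha \in I_K^G)$ of Proposition \ref{prop: existence of S joint}, with $\sigma(S_\alpha) = \FF_\alpha$ the invariant $\sigma$-field of the $K$-commuting permutations fixing $\alpha$; the paper then inducts on the \emph{size of the domain} (the sets $J_k$ in Theorem \ref{thm:main2}), not on the vertex set. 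Second, the step you call the core --- ``conditionally on $\bX'$ and the lower uniforms this family is exchangeable, apply a conditional de Finetti/Aldous--Hoover'' --- assumes exactly what has to be proved: conditional exchangeability does not yield conditional \emph{independence} of the variables at distinct top-level indices given the correct (much smaller) conditioning $\sigma$-fields. That conditional independence is Proposition \ref{prop:keytool}, and its proof is the technical heart of the paper (the shift injections $\rho_\alpha$, the tail characterization $\FF_\alpha = \bigcap_{n} \FF_\alpha^n$ of Lemma \ref{lem: tail coding}, and backward martingale convergence). You flag this as the main obstacle but give no argument for it.

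Moreover, the indexing of the new randomness in your step is wrong in general, not just unproven. Take $G = \{v,w,x\}$ with $E = \{\overrightarrow{wx}\}$ and let $K$ be the CLIC generated by $\kappa : \{v\} \to \{w\}$ (one checks this satisfies Definition \ref{def: CLIC}). Then $v$ is maximal, $w$ is not, and $v \buildrel K \over \sim w$. For $\alpha, \beta \in \N_K^G$ we have $[\alpha|_{C_v}]_K = [\beta|_{C_w}]_K$ whenever $\alpha(v) = \beta(w)$, so the uniform $U_{[\alpha|_{C_v}]_K}$ demanded by \eqref{eq: main1} at the ``top'' level of $X_\alpha$ is the \emph{same random variable} as a uniform already consumed by the representation over $G' = \{w,x\}$. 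Hence the top layer cannot consist of fresh i.i.d.\ uniforms independent of the lower layer, and any conditional-de Finetti argument that introduces new independent randomness per top-level class produces a representation of the wrong form (essentially the separate rather than the joint one dictated by $K$). You sensed this difficulty (``the class of $v$ may not be confined to the top level''), but ``peeling the class coherently'' is not a fix: the remedy is to abandon vertex-peeling and organize the induction over the global set of representatives $\Gamma_K^G$ stratified by domain size, so that equivalent indices sitting at different vertices are a single object from the start --- which is what the paper's proof of Theorem \ref{thm:main2}, combined with the final transfer-lemma argument making $(\bX,\bU)$ jointly $(G,K)$-exchangeable (needed to pass from representatives in $\Gamma_K^G$ to all of $\N_K^G$, a step your sketch also omits), accomplishes.
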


\begin{example} Let us inspect the classes of permutations associated to the CLIC's introduced in Example \ref{example: CLIC} and the representations of the according exchangeable arrays. For the first case (random block matrices), each of the describe CLIC's represents the law-invariance under the following permutations of indices, respectively, where $i,j,k,\ell$ are the index values at $u,v,r,c$, respectively:
\begin{enumerate}
    \item $X_{ij,k\ell} \to X_{\tau(i)\tau(j),\rho_{\{i,j\}}(k)\lambda_{\{i,j\}}(\ell)}$
    \item $X_{ij,k\ell} \to X_{\tau(i)\theta(j),\rho_{ij}(k)\rho_{ij}(\ell)}$
    \item $X_{ij,k\ell} \to X_{\tau(i)\tau(j),\rho_{\{i,j\}}(k)\rho_{\{i,j\}}(\ell)}$
\end{enumerate}

For the three cases, Theorem \ref{thm:main} provides representations of the following forms, respectively:
\begin{enumerate}
    \item $X_{ij,k\ell}=f(U_{0,00},U_{i,00},U_{j,00}, U_{\{i,j\},00}, U_{\{i,j\},k0}, U_{\{i,j\},0\ell}, U_{\{i,j\},k\ell}), \ i\neq j.$
    \item $X_{ij,k\ell}=f(U_{00,0},U_{i0,0},U_{0j,0}, U_{ij,00}, U_{ij,k}, U_{ij,\ell}, U_{ij,\{k,\ell\}}),\ (i,j,k) \neq (i,j,\ell).$
    \item $X_{ij,k\ell}=f(U_{0,0},U_{i,0},U_{j,0}, U_{\{i,j\},0}, U_{\{i,j\},k}, U_{\{i,j\},\ell}, U_{\{i,j\},\{k,\ell\}}),\ i\neq j,\ (i,j,k) \neq (i,j,\ell).$
\end{enumerate}
For the second example, each of the cases allows permutations of the following forms, respectively, where $i,j,k,\ell,m$ are the index values at $u_1,v_1,u_2,v_2,v_3,$ respectively:
\begin{enumerate}
    \item[4.] $X_{ij,k\ell, m} \to X_{\tau(i)\tau(j),\rho_{i}(k)\lambda_{j}(\ell) \theta_{j \ell} (m)}$
    \item[5.] $X_{ij,k\ell, m} \to X_{\tau(i)\tau(j),\rho_{i}(k)\rho_{j}(\ell) \theta_{j \ell} (m)}$
\end{enumerate}

For each of the two cases, Theorem \ref{thm:main} provides a representation of the following forms:
\begin{enumerate}
    \item[4.] $\begin{aligned}[t] X_{ij,k\ell,m} = f( & U_{0,00,0}, U_{i,00,0}, U_{i,k0,0}, U_{j,00,0}, U_{\{i,j\},00,0}, U_{\{i,j\},k0,0}, \\ & U_{j,0\ell,0}, U_{\{i,j\},0\ell,0}, U_{\{i,j\},k\ell,0}, U_{j,0\ell,m}, U_{\{i,j\},0\ell,m},U_{\{i,j\},k\ell,m}), \\ & i \neq j. \end{aligned}$
    \item[5.] $\begin{aligned}[t] X_{ij,k\ell,m} = f( & U_{0,0,0}, U_{i,0,0}, U_{i,k,0}, U_{j,0,0}, U_{\{i,j\},0,0}, U_{\{i,j\},k,0}, \\ & U_{j,\ell,0}, U_{\{i,j\},\ell,0}, U_{\{i,j\},\{k,\ell\},0}, U_{j,\ell,m}, U_{\{i,j\},\ell,m},U_{\{i,j\},\{k,\ell\},m}), \\ & i \neq j, \ (i,k) \neq (j,\ell). \end{aligned} $
\end{enumerate}
\end{example}

\subsection{Symmetry random variables associated to jointly DAG-exchangeable arrays}\label{section: symmetry}

The overall plan of the proof of Theorem \ref{thm:main} is similar to that of Theorem \ref{thm: separate DAG, 1}. We deploy induction on the number of vertices of $G$. To do this, we have to construct random variables which encode the intermediate information associated to $\bX$, which we call a \textit{symmetry random variables} associated to $\bX$. We will see that the randomness of the uniform random variables affect $\bX$ only through symmetry random variables. A typical example of this phenomenon is the role of the empirical distribution in an exchangeable sequence (see Lemma 7.1 of \cite{kallenberg2006probabilistic}).

The key property we need to show in this strategy is conditional independence among the involved $\sigma$-fields, and that is Proposition \ref{prop:keytool} in our case. It is a parallel of Proposition 4.1 of \cite{jung2020generalization}, of which the proof is based on results of \cite{hoover1979relations}. One aspect of Hoover's proof is that it depends heavily on nonstandard analysis and symbolic logic. In this paper, we provide a probabilistic proof of Proposition \ref{prop:keytool} independent of Hoover's. As mentioned at the beginning of the paper, our strategy resembles that of \cite{kallenberg2006probabilistic} (especially Chapter 7) in the proof of the Aldous-Hoover representation theorem in a sense that we use systematic tools to prove conditional independence between involved random variables to deploy coding lemmas that provides representations using independent uniform random variables. (See the appendix for the lemmas that we use in the proof of the main result.)

Let $\bX$ be a $(G,K)$-exchangeable array. For $\aa \in I_K^G$, let us write $C_{K,\aa}(S_\N^{\rtimes G})$ for the collection of $K$-commuting permutations $\tau$ such that $\tau \alpha = \alpha$.

The following are basic properties of $C_{K,\aa}(S_\N^{\rtimes G}).$ 
\begin{itemize}
    \item[(a)] If $\alpha \in Restr(\beta)$, then $C_{K,\bb}(S_\N^{\rtimes G}) \subseteq C_{K,\aa}(S_\N^{\rtimes G}).$
    \item[(b)] If $\alpha \buildrel K \over \sim \beta$, then $C_{K,\aa}(S_\N^{\rtimes G})=C_{K,\bb}(S_\N^{\rtimes G}).$
\end{itemize} 
The property (a) is obvious. The new property (b) follows from the fact that $\tau \in Z_K(S_\N^{\rtimes G})$ commutes with the elements of $K$.

Let $\FF_\alpha$ denote the invariant $\sigma$-field of $C_{K,\aa}(S_\N^{\rtimes G})$, that is,
$$\FF_\aa : = \{A \in \sigma(\bX) : \tau A = A \text{ for all } \tau \in C_{K,\aa}(S_\N^{\rtimes G})\}.$$
(we have $\FF_\aa = \FF_\bb$ for $\aa \buildrel K \over \sim \bb$ by (b).) We want to construct a Borel-valued random array $\bS:=(S_\alpha : \alpha \in I_K^G)$ satisfying the following properties, and call it a \textbf{random symmetry array} associated to $\bX$:

\begin{enumerate}
    \item $\FF_\alpha = \sigma(S_\alpha).$
    \item The array $(\bX, \bS)$ is $(G,K)$-exchangeable.
    \item For $\alpha \buildrel K \over \sim \beta$, $S_\alpha=S_\beta.$
\end{enumerate}

The existence of symmetry arrays is a straightforward exercise. We give the proof in Appendix \ref{app: sym array}.

\begin{prop}\label{prop: existence of S joint} For any $(G,K)$-exchangeable array $\bX$ taking values in a Borel space, a random symmetry array exists.
\end{prop}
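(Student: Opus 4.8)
The plan is to realize the symmetry variable $S_\alpha$ as a real-valued (or Borel-valued) coding of the regular conditional distribution of $\bX$ given $\FF_\alpha$, and to read off all three required properties from the behaviour of conditional distributions under the group action. Since the index set $I_K^G$ is countable and $\XX$ is Borel, the $\sigma$-field $\sigma(\bX)$ is countably generated, and hence so is each sub-$\sigma$-field $\FF_\alpha$ modulo null sets. It is convenient to pass to the canonical model in which $\Omega=\XX^{I_K^G}$, the array $\bX$ is the coordinate process, and $\bbP=\mathcal{L}(\bX)$; then every $K$-commuting $\tau$ acts on $\Omega$ by the reindexing map $T_\tau$ with $(T_\tau\omega)_\alpha=\omega_{\tau^{-1}\alpha}$, and $(G,K)$-exchangeability of $\bX$ is exactly the statement that each $T_\tau$ is measure-preserving.

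I would then define $\mu_\alpha:=\bbP\big[\bX\in\cdot\mid\FF_\alpha\big]$, a random probability measure on $\XX^{I_K^G}$, and let $S_\alpha$ be its image under a fixed Borel isomorphism from the space of probability measures on a Borel space onto a Borel subset of $[0,1]$ (such an isomorphism exists because the space of probability measures on a Borel space is again Borel). Property~1 then follows from countable generation: choosing a countable family $\{g_n\}$ of bounded measurable functions with $\sigma(g_n(\bX):n)=\sigma(\bX)$, each conditional expectation $\bbE[g_n(\bX)\mid\FF_\alpha]=\int g_n\,d\mu_\alpha$ is $\sigma(S_\alpha)$-measurable, and this countable family generates $\FF_\alpha$ up to null sets (the standard coding lemma recorded in the appendix), so $\sigma(S_\alpha)=\FF_\alpha$. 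Property~3 is immediate: by property~(b) above we have $\FF_\alpha=\FF_\beta$ whenever $\alpha\buildrel K\over\sim\beta$, hence $\mu_\alpha=\mu_\beta$ and so $S_\alpha=S_\beta$.

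The substance of the argument, and the step I expect to be the main obstacle, is property~2. The key structural input is the covariance relation $\tau\,\FF_\alpha=\FF_{\tau\alpha}$ for every $K$-commuting $\tau$. This follows by conjugation: since $Z_K(S_\N^{\rtimes G})$ is a group and $C_{K,\alpha}(S_\N^{\rtimes G})$ is the stabilizer of $\alpha$ within it, one checks $\tau\,C_{K,\alpha}(S_\N^{\rtimes G})\,\tau^{-1}=C_{K,\tau\alpha}(S_\N^{\rtimes G})$, and the invariant $\sigma$-field transports accordingly. Combining this with the fact that $T_\tau$ is measure-preserving, via the transformation rule $\bbE[g\mid\G]\circ T=\bbE[g\circ T\mid T^{-1}\G]$ for conditional expectations under measure-preserving maps, yields that the coded conditional distributions transform equivariantly, giving
\begin{equation}
\big(X_\alpha,S_\alpha:\alpha\in\N_K^G\big)\ \d=\ \big(X_{\tau\alpha},S_{\tau\alpha}:\alpha\in\N_K^G\big)
\end{equation}
for all $K$-commuting $\tau$, which is exactly $(G,K)$-exchangeability of $(\bX,\bS)$.

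The delicate points I anticipate are purely bookkeeping: first, verifying that the conjugation identity for the stabilizers holds at the level of local isomorphisms (using that $\tau$ is $K$-commuting and that $\buildrel K\over\sim$ is preserved by $\tau$), so that the covariance relation $\tau\,\FF_\alpha=\FF_{\tau\alpha}$ is exact rather than merely up to null sets; and second, arranging the Borel coding once and for all so that the \emph{single} measure-preserving map $T_\tau$ transports the entire family $(S_\alpha)_{\alpha}$ onto $(S_{\tau\alpha})_{\alpha}$ simultaneously, rather than index by index. Neither is conceptually hard, but both require care in matching the conjugation action on stabilizer subgroups with the measure-theoretic transformation of conditional distributions, which is why I place the weight of the proof there.
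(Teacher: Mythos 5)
Your construction is genuinely different from the paper's --- the paper never touches conditional distributions; it picks an abstract Borel generator $S_{\gamma_C}$ of $\FF_{\gamma_C}$ for one representative per class and then \emph{transports} it along the group action, setting $S_\gamma := f_C(\tau\bX)$ --- but your version has a genuine gap, located exactly at the step you set aside as ``purely bookkeeping.'' The transformation rule $\bbE[g\mid\mathcal{G}]\circ T=\bbE[g\circ T\mid T^{-1}\mathcal{G}]$ is correct, but it does not yield equivariance of the coded conditional laws: applying it to functions of $\bX$ shows that $\mu_\alpha\circ T_\tau$ is a version of the conditional law of $\bX\circ T_\tau$ --- \emph{not} of $\bX$ --- given $T_\tau^{-1}\FF_\alpha$. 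Writing $\iota$ for your fixed Borel coding, one finds $S_\alpha\circ T_\tau=\iota\big((T_\tau)_*\,\mu_{\tau\alpha}\big)$, whereas $S_{\tau\alpha}=\iota(\mu_{\tau\alpha})$: a nontrivial pushforward $(T_\tau)_*$ is trapped inside the fixed coding, and no appeal to versions removes it. The root cause is that $\bbP[\bX\in\cdot\mid\FF_\alpha]$ remembers the index $\alpha$ itself (for instance through which coordinates of the conditioned array are degenerate), so a single fixed isomorphism $\iota$ cannot make the family $(S_\alpha)$ transform correctly.

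This is not a technicality: property 2 actually fails for the array you defined. Take $G$ a single vertex, $K$ trivial, and $\bX=(X_n : n\in\N)$ i.i.d.\ uniform on $[0,1]$. Then $\FF_n=\sigma(X_n)$ modulo null sets (Hewitt--Savage), and $\mu_n$ is almost surely the product measure with factor $\delta_{X_n}$ in coordinate $n$ and Lebesgue measure in all other coordinates. Consider the event ``the coordinate-$1$ marginal of the measure decoded from the first $S$-entry is a point mass.'' For $(S_1,S_2,\dots)$ this has probability $1$, since the first marginal of $\mu_1$ is $\delta_{X_1}$; for the permuted array $(S_{\tau(1)},S_{\tau(2)},\dots)=(S_2,S_1,\dots)$ with $\tau=(1\ 2)$ it has probability $0$, since the first marginal of $\mu_2$ is Lebesgue. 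So already the marginal law of $(S_{\tau\alpha})_\alpha$ differs from that of $(S_\alpha)_\alpha$, and $(\bX,\bS)$ is not $(G,K)$-exchangeable. Repairing this requires changing the definition of $S_\alpha$, not verifying it more carefully: either code a \emph{standardized} conditional law (relabel $\bX$ by a permutation carrying $\alpha$ to its representative $\gamma_\alpha$ before conditioning and coding, and then check that the choice of relabeling is immaterial), or do what the paper does and define $S$ only at representatives, extending along the orbit by $S_\gamma:=f_C(\tau\bX)$. In either case the substance of the proof becomes the well-definedness and equivariance of that transport --- the local-isomorphism computation in the paper's Appendix B --- which is precisely the content your argument is missing.
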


Once we have an associated symmetry array, we can improve Theorem \ref{thm:main} so that the dependence structure of intermediate $\sigma$-fields is more explicit. For a generic array $\bY=(Y_i : i \in I)$ and $J \subseteq I$, we write $\bY_J :=(Y_i : i \in J).$

Assign a well-ordering on $\AA_G$. For each equivalence class of $I_K^G$ under $\buildrel K \over \sim$, choose a representative whose domain is the smallest under this well-ordering. From now on, let us assume that we have a fixed collection of such representatives, and denote this collection by $\Gamma_K^G$.\footnote{For those who are concerned with using the axiom of choice in this procedure, we note that it is not the case. Since $\AA_G$ is a finite set, we do not need the well-ordering principle when we choose a well-ordering. When choosing the representatives, for each subgraphs we can assign a well-ordering on the vertices and select the smallest element in the lexicographical order.}

\begin{thm}\label{thm:main2} Let $G$, $K$, $\bX$ be as in Theorem \ref{thm:main}, and let $\bS$ be a symmetry array of $\bX$. Then, there exist measurable functions $f_C: [0,1]^{\AA_C} \to \XX$ such that for all $\alpha \in \Gamma_K^G$,
\begin{equation}\label{eq: main2}
S_\alpha \buildrel a.s. \over = f_{\dom(\alpha)} \big( \bS_{Restr'(\gamma_\alpha)}, U_\alpha \big)
\end{equation}
for some array $\bU$ of i.i.d. uniform random variables indexed by $\Gamma_K^G$.
\end{thm}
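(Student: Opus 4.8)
The plan is to establish \eqref{eq: main2} by a recursive coding argument over the representatives $\alpha \in \Gamma_K^G$, processed in order of nondecreasing domain size (breaking ties by the fixed well-ordering of $\AA_G$); the two engines are the conditional-independence Proposition \ref{prop:keytool} and the functional coding lemmas collected in the appendix. Write $\HH_\alpha := \sigma(\bS_{Restr'(\gamma_\alpha)})$ for the $\sigma$-field generated by the symmetry variables at the proper restrictions of $\alpha$. First I would record the inclusion $\HH_\alpha \subseteq \FF_\alpha$: if $\beta$ is a proper restriction of $\alpha$, then $\beta \in Restr(\alpha)$, so by property (a) we have $C_{K,\alpha}(S_\N^{\rtimes G}) \subseteq C_{K,\beta}(S_\N^{\rtimes G})$; the invariant $\sigma$-field of the larger group $C_{K,\beta}(S_\N^{\rtimes G})$ is the smaller one, giving $\sigma(S_\beta)=\FF_\beta \subseteq \FF_\alpha = \sigma(S_\alpha)$. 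Consequently each $S_\alpha$ is measurable over $\HH_\alpha$ together with the ``new'' randomness we wish to isolate, and the appendix coding lemma produces a uniform $U_\alpha$ with $U_\alpha$ independent of $\HH_\alpha$ and $S_\alpha \buildrel a.s. \over = f(\bS_{Restr'(\gamma_\alpha)}, U_\alpha)$ for a measurable $f$. After enlarging the space with an auxiliary independent uniform $\vartheta_\alpha$, we may take $U_\alpha$ to be a measurable function of $S_\alpha$, $\bS_{Restr'(\gamma_\alpha)}$ and $\vartheta_\alpha$.

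Two refinements remain. To see that $f$ can be chosen to depend only on $C = \dom(\alpha)$, I would invoke the $(G,K)$-exchangeability of the joint array $(\bX, \bS)$ (property 2 of the symmetry array). Given two representatives $\alpha, \alpha'$ with $\dom(\alpha) = \dom(\alpha') = C$, transitivity of the $K$-commuting action on $\N_K^C$ provides a $K$-commuting permutation carrying $\alpha$ to $\alpha'$; since a $G$-permutation respects restrictions via \eqref{eq: G automorphism}, it sends the restriction slots of $\alpha$ bijectively onto those of $\alpha'$, and by \eqref{eq: joint DAG exchangeability} it identifies the joint law of $(S_\alpha, \bS_{Restr'(\gamma_\alpha)})$ with that of $(S_{\alpha'}, \bS_{Restr'(\gamma_{\alpha'})})$. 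Hence the regular conditional distribution of $S_\alpha$ given its lower variables depends on $\alpha$ only through $C$, and a single function $f_C$ applies to every representative with domain $C$. Property 3 ($S_\alpha = S_\beta$ for $\alpha \buildrel K \over \sim \beta$) guarantees these constructions descend to equivalence classes and so are indexed by $\Gamma_K^G$, as required.

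The heart of the proof is showing that $(U_\alpha : \alpha \in \Gamma_K^G)$ is globally i.i.d. uniform, rather than merely marginally uniform with $U_\alpha$ independent of $\HH_\alpha$. I would argue by induction along the chosen order, proving at each step that $U_\alpha$ is independent of $\sigma(U_\beta : \beta \text{ precedes } \alpha)$. Each earlier $U_\beta$ lies in $\sigma(S_\beta) \vee \sigma(\vartheta_\beta)$ with the $\vartheta_\beta$ kept mutually independent and independent of $\sigma(\bX,\bS)$, so it suffices to establish the conditional independence of $\FF_\alpha$ and $\FF_\beta$ given $\HH_\alpha$ for every $\beta$ preceding $\alpha$. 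For such $\beta$ one has $|\dom(\beta)| \le |\dom(\alpha)|$ and $\beta$ not $K$-equivalent to $\alpha$, so the common part of $\alpha$ and $\beta$ is (equivalent to) a proper restriction of $\alpha$, whose symmetry variable is already contained in $\HH_\alpha$; Proposition \ref{prop:keytool} then yields conditional independence given this common part. Combined with the independence of $U_\alpha$ from $\HH_\alpha$, this gives independence of $U_\alpha$ from $\HH_\alpha \vee \sigma(U_\beta : \beta \text{ precedes } \alpha)$, and the chain rule upgrades marginal uniformity to the full i.i.d. statement.

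The main obstacle I anticipate is exactly this conditional-independence bookkeeping: one must pin down, for $\alpha$ and each earlier $\beta$, the correct common index, verify it is a proper restriction of $\alpha$ (using $|\dom(\beta)| \le |\dom(\alpha)|$ together with non-$K$-equivalence and the transitivity built into $\N_K^G$), and check that the hypotheses of Proposition \ref{prop:keytool} apply to $\FF_\alpha$ and $\FF_\beta$ relative to the $\sigma$-field of that common part inside $\HH_\alpha$. A secondary technical point is the auxiliary randomization: extracting $U_\alpha$ as a genuine uniform requires the conditional law to be coded measurably, so one must keep the noises $\vartheta_\alpha$ disjoint across steps and confirm that the earlier $U_\beta$ never leak information beyond $\sigma(\bX,\bS) \vee \sigma(\vartheta_\beta)$. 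Once these are in place the recursion closes and \eqref{eq: main2} follows.
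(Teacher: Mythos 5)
Your proposal reaches the theorem by a genuinely different mechanism than the paper, and it is viable in outline, but one reduction step is stated in a form that is actually false and must be repaired.

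The comparison: the paper never attempts to prove that the noises extracted by conditional coding are independent; it explicitly concedes that its variables $V_\alpha$ (obtained from Lemma \ref{lem: conditional independence randomization}) ``are not necessarily independent.'' Instead it inducts on the domain size $k$, builds at stage $k$ a \emph{shadow} family $S'_\alpha := f_\alpha(\bS_{Restr'(\alpha)}, W_\alpha)$ from fresh i.i.d.\ uniforms, proves $\bS_k \buildrel d \over = \bS'_k$ via Lemma \ref{lem: identification} (both families being conditionally independent given $\bS_{k-1}$ with the same conditional laws, by Corollary \ref{cor: independence given the previous}), and then invokes Lemma \ref{lem: coding lemma2} to replace the shadow noises by an i.i.d.\ array for which the almost-sure equations hold. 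You skip the shadow family, the distributional identification, and the re-coding: you extract $U_\alpha$ once and for all by the Transfer Lemma \ref{lem: coding lemma} with private noise $\vartheta_\alpha$, and then argue that these particular $U_\alpha$ are already jointly i.i.d. This can be made to work, and it buys a shorter proof avoiding Lemmas \ref{lem: identification} and \ref{lem: coding lemma2}: writing $\HH_\alpha := \sigma(\bS_{Restr'(\gamma_\alpha)})$ and $\BB_\alpha := \sigma(\bS \backslash \bS_{[\alpha]_K}) \vee \sigma(\vartheta_\beta : \beta \neq \alpha)$, the joint conditional independence \eqref{eq: independence from the rest1}, augmented by the independent noises, gives $U_\alpha \underset{\HH_\alpha}{\independent} \BB_\alpha$ because $U_\alpha \in \sigma(S_\alpha, \vartheta_\alpha)$; combined with $U_\alpha \independent \HH_\alpha$ and $\HH_\alpha \subseteq \BB_\alpha$ this upgrades to $U_\alpha \independent \BB_\alpha$, and since every other $U_\beta$ is $\BB_\alpha$-measurable (as $U_\beta \in \sigma(S_\beta,\vartheta_\beta)$), mutual independence of the whole family follows.

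The gap: your sentence ``it suffices to establish the conditional independence of $\FF_\alpha$ and $\FF_\beta$ given $\HH_\alpha$ for every $\beta$ preceding $\alpha$'' is not a valid reduction. Pairwise conditional independence of $\FF_\alpha$ from each individual $\FF_\beta$ given $\HH_\alpha$ does not imply independence of $U_\alpha$ from the \emph{join} $\sigma(U_\beta : \beta \text{ precedes } \alpha)$; what is needed is the single joint statement that $\FF_\alpha$ is conditionally independent of $\underset{\beta}{\vee}\big(\FF_\beta \vee \sigma(\vartheta_\beta)\big)$ given $\HH_\alpha$. Your subsequent argument, which applies Proposition \ref{prop:keytool} to one pair $(\alpha,\beta)$ at a time and identifies $\alpha \wedge \beta$ as a proper restriction of $\alpha$, only delivers the pairwise version. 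The repair is immediate with the tools you already cite: invoke \eqref{eq: independence from the rest1} from Corollary \ref{cor: independence given the previous} (equivalently, apply Proposition \ref{prop:keytool} to the entire finite collection $\{\alpha\} \cup \{\beta : \beta \text{ precedes } \alpha\}$ and tower up to $\HH_\alpha$ via Lemma \ref{lem: towering}), noting that the corollary's proof establishes \eqref{eq: independence from the rest1} for every $\alpha \in J_k$ with $k$ arbitrary, i.e.\ for every representative, not only those with full domain. With that substitution, together with your bookkeeping that the $\vartheta_\beta$ are mutually independent and independent of $\sigma(\bX,\bS)$, the recursion closes and the rest of the proposal (uniformity of $f_C$ over representatives with the same domain via exchangeability, descent to $\Gamma_K^G$ via property 3 of the symmetry array) is sound.
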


Note that by recursively replacing $S_\beta$ with $f_{\dom(\beta)}\big( \bS_{Restr'(\gamma_\beta)}, U_\beta \big)$, for each $\aa \in I_K^G$ we obtain the alternate representation of \eqref{eq: main2} of the form
\begin{equation}\label{eq: main3} S_\alpha = g_{\dom(\alpha)}\Big(U_\beta : \beta \in Restr(\gamma_\alpha)\Big)
\end{equation}
for some measurable functions $g_C$, where $U_\bb = U_{\gamma_\bb}$ for $\bb \in I_K^G$.

As mentioned earlier, the basic strategy of our proof is using induction on $|G|$, the number of vertices of $G$. We first build representations on the proper subgraphs of $G$, and tie them all together into a representation in the whole $G$. Proposition \ref{prop:keytool} is a key result which makes this ``tying'' possible.

Let $\alpha, \beta \in I_K^G$, and define $Restr(\alpha,K)$ to be the collection of $\alpha' \in I_K^G$ such that $\alpha' \buildrel K \over \sim \alpha|_C$ for some $C \in \AA_{\dom(\alpha)}$. Let $$D_{\alpha, \beta}:=\{v \in \dom(\alpha) : \alpha|_{C_v}\in Restr(\beta,K)\},$$ and define $$\alpha \wedge \beta := \alpha|_{D_{\alpha,\beta}}.$$ These are the joint-exchangeability counterparts of restrictions and intersections of two indices for separate DAG-exchangeability. 

\begin{remark} We have $\aa \wedge \beta \buildrel K \over \sim \beta \wedge \aa$ for all $\aa, \bb \in I_K^G.$ For each $v \in D_{\aa,\bb}$, there exists $u \in \dom(\bb)$ such that $\aa|_{C_v}=\kappa(\bb|_{C_u})$ for some $\kappa \in K$. By the definition of $I_K^G$, such a vertex $u$ is unique, so we can define a mapping $\phi: D_{\aa,\bb} \to D_{\bb,\aa}$ via this relation. One can easily see that $\phi$ is the desired bijective correspondence to guarantee that $\aa \wedge \bb \buildrel K \over \sim \bb \wedge \aa.$ One can also easily check that $\aa \wedge \bb, \bb \wedge \aa \in Restr(\aa,K).$
\end{remark}  

\begin{prop}\label{prop:keytool} Let $\alpha_1,...,\alpha_n \in I_K^G$. Then, $(S_{\alpha_k}: k\leq n)$ are independent given $(S_{\alpha_k \wedge \alpha_j} : k \neq j).$
\end{prop}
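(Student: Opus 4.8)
\section*{Proof proposal for Proposition \ref{prop:keytool}}

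The plan is to translate the statement into the language of invariant $\sigma$-fields and then prove the resulting conditional independence by an ergodic-averaging argument over suitable $K$-commuting permutations. Since $\sigma(S_\alpha)=\FF_\alpha$ and, by property 3 of the symmetry array, $S_{\alpha\wedge\beta}$ depends on $\alpha\wedge\beta$ only through its $\buildrel K \over \sim$-class, I would first restate the claim as: the fields $\FF_{\alpha_1},\dots,\FF_{\alpha_n}$ are conditionally independent given $\G:=\bigvee_{k\neq j}\FF_{\alpha_k\wedge\alpha_j}$. The preliminary observation is that this conditioning field already sits inside each $\FF_{\alpha_k}$: by the Remark preceding the proposition, $\alpha_k\wedge\alpha_j\in Restr(\alpha_k,K)$, so $\alpha_k\wedge\alpha_j$ is $\buildrel K \over \sim$-equivalent to a restriction of $\alpha_k$, and properties (a) and (b) of $C_{K,\cdot}(S_\N^{\rtimes G})$ give $C_{K,\alpha_k}(S_\N^{\rtimes G})\subseteq C_{K,\alpha_k\wedge\alpha_j}(S_\N^{\rtimes G})$, whence $\FF_{\alpha_k\wedge\alpha_j}\subseteq\FF_{\alpha_k}$. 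By the elementary criterion that random elements are conditionally independent given a $\sigma$-field exactly when conditional expectations of products factor, it then suffices to verify, for bounded $\FF_{\alpha_k}$-measurable $\xi_k$, that $\bbE\big[\prod_k\xi_k\mid\G\big]=\prod_k\bbE[\xi_k\mid\G]$.

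For the core argument I would realize $\bbE[\xi_k\mid\G]$ as an empirical average over $K$-commuting permutations that fix $\G$. Concretely, for each $k$ I construct a sequence of permutations $\tau_k^{(1)},\tau_k^{(2)},\dots\in Z_K(S_\N^{\rtimes G})$, each fixing every index that is $\buildrel K \over \sim$-equivalent to some $\alpha_k\wedge\alpha_j$ (so it fixes $\G$ and preserves every $\FF_{\alpha_k\wedge\alpha_j}$), while sending the \emph{private} coordinates of $\alpha_k$, namely those $v\in\dom(\alpha_k)$ with $\alpha_k|_{C_v}\notin\bigcup_j Restr(\alpha_k\wedge\alpha_j,K)$, to pairwise disjoint fresh values in $\N$. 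Letting $\xi_k^{(m)}$ be the image of $\xi_k$ under $\tau_k^{(m)}$, the $(G,K)$-exchangeability of $(\bX,\bS)$ makes $(\xi_k^{(m)})_m$, conditionally on $\G$, a conditionally i.i.d.\ sequence with the conditional law of $\xi_k$; by the exchangeable ergodic theorem behind Lemma 7.1 of \cite{kallenberg2006probabilistic}, its averages converge to $\bbE[\xi_k\mid\G]$, which is thereby $\G$-measurable.

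The factorization is then obtained by spreading the private parts of the different indices into mutually disjoint blocks of fresh coordinates, so that the shift sequences $(\tau_k^{(m)})_m$ can be chosen jointly spread across $k$. The product $\prod_k\xi_k^{(m_k)}$, averaged over $m_1,\dots,m_n$ ranging independently, converges to $\prod_k\bbE[\xi_k\mid\G]$; on the other hand, invariance of the law of $(\bX,\bS)$ under each shift together with a reverse-martingale argument identifies $\bbE\big[\prod_k\xi_k\mid\G\big]$ with the same limiting average. Equating the two expressions gives the product formula, hence the desired conditional independence.

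The step I expect to be the main obstacle — and the one that replaces Hoover's nonstandard-analysis argument with a probabilistic one — is the simultaneous construction of the shift permutations $\tau_k^{(m)}$. They must respect the hierarchical index structure (acting compatibly across all the $\N^{C_v}$), lie in $Z_K(S_\N^{\rtimes G})$ so as to descend to $\buildrel K \over \sim$-classes and preserve $\N_K^G$, fix every index equivalent to a pairwise intersection, and move the private parts of all $n$ indices into disjoint fresh coordinates without collision; the combinatorial bookkeeping of which restrictions count as shared versus private across the $\binom{n}{2}$ intersections is delicate. The second genuinely nontrivial point is upgrading exchangeability of the shifted sequences to the conditional i.i.d.\ property needed for the averages to both converge and factorize, which requires combining the invariant-$\sigma$-field characterization of $\FF_{\alpha_k}$ with reverse-martingale convergence.
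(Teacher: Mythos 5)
Your raw material is right and close in spirit to the paper's: the paper's proof also hinges on shift maps (the injections $\rho_{\alpha}$, realizable as limits of $K$-commuting permutations) that fix everything related to $Restr(\alpha,K)$ and push all other coordinates to fresh values. But your core step is circular. The claim you defer as ``the second genuinely nontrivial point''---that the shifted copies $(\xi_k^{(m)})_m$ are, conditionally on $\GG=\bigvee_{k\neq j}\FF_{\alpha_k\wedge\alpha_j}$, i.i.d.\ with the conditional law of $\xi_k$---is not an upgrade of exchangeability; it is essentially the proposition itself. What exchangeability actually gives is that the empirical averages of $(\xi_k^{(m)})_m$ converge to $\bbE[\xi_k\mid\mathcal{I}_k]$, where $\mathcal{I}_k$ is the invariant (equivalently, tail) $\sigma$-field of that shifted sequence. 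Since your shifts fix $\GG$ you get $\GG\subseteq\mathcal{I}_k$, but the containment you need---that this limit is $\GG$-measurable, i.e.\ that the tail collapses onto $\GG$---is exactly the hard part, and it is false in general that the tail of a join of sequences equals the join of their tails: the footnote to Lemma \ref{lem: tail emersion} in the paper gives an explicit counterexample ($Z_n=X_nY_n$), showing that this collapse requires a conditional independence hypothesis, which is precisely what you are trying to prove.

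The paper escapes this circularity by inducting on $n$, which your all-at-once scheme omits. For a single index, Lemma \ref{lem: tail coding} identifies the invariant field $\FF_\alpha$ with the tail $\bigcap_n\FF_\alpha^n$; to control the join over $j$ of the fields $\FF^k_{\alpha_1\wedge\alpha_j}$ as $k\to\infty$ one invokes Lemma \ref{lem: tail emersion}, whose hypothesis (conditional independence of the factors) is supplied by the inductive hypothesis applied to $\alpha_2,\dots,\alpha_n$. The conditional independence of $S_{\alpha_1}$ from the rest is then extracted without any ergodic averaging: the distributional identity $(S_{\alpha_1},\dots,S_{\alpha_n})\buildrel d \over = (S_{\alpha_1},\rho^k(S_{\alpha_2}),\dots,\rho^k(S_{\alpha_n}))$ feeds into Kallenberg's Lemma \ref{lem: inclusive conditional indep}, followed by backward martingale convergence in $k$ and the towering Lemma \ref{lem: towering}. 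To salvage your argument you would need to (i) restructure it as an induction on $n$, so that a conditional-independence hypothesis is available when you identify the limit $\sigma$-field of your averages, and (ii) replace the assumed ``conditionally i.i.d.\ given $\GG$'' step by an actual identification of $\mathcal{I}_k$ with (a subfield of) $\GG$, which is where Lemmas \ref{lem: tail coding} and \ref{lem: tail emersion}, or equivalents, are unavoidable.
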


The next corollary follows from Proposition \ref{prop:keytool}. Let us first define some notation.
\begin{itemize}
    \item $Restr'(\alpha,K):= Restr(\alpha,K)\backslash [\alpha]_K$,
    \item $J_k:=\{ \aa \in \Gamma_K^G : |\dom(\aa)| \leq k \}$,
\end{itemize}

\begin{cor}\label{cor: independence given the previous} Let $\bS_k:=(S_\alpha : \aa \in J_k).$ Then, for $1 \leq k \leq |G|$, $\bS_k$ is a conditionally independent family given $\bS_{k-1}$. In particular, for $\alpha \in {\N}_K^G$, we have
\begin{equation*}\label{eq: independence from the rest final} S_\alpha \underset{\bS_{Restr'(\alpha,K)}}{\independent} \bS \backslash \bS_{[\alpha]_K}.
\end{equation*}
\end{cor}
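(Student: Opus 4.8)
The plan is to deduce both assertions from Proposition~\ref{prop:keytool} by combining it with the elementary graphoid properties of conditional independence (weak union and the chain rule, as in \cite{kallenberg2006probabilistic}) together with one combinatorial lemma controlling the domain of a meet. The lemma I would isolate first is: \emph{for distinct representatives $\aa,\bb\in J_k$ one has $|\dom(\aa\wedge\bb)|\le k-1$.} Indeed $\dom(\aa\wedge\bb)\subseteq\dom(\aa)$, and equality $\dom(\aa\wedge\bb)=\dom(\aa)$ would force $\aa\in Restr(\bb,K)$, i.e. $\aa\buildrel K\over\sim\bb|_C$ for some closed $C\subseteq\dom(\bb)$; since $\buildrel K\over\sim$ preserves domain size this gives $|\dom(\aa)|\le|\dom(\bb)|\le k$, and when $|\dom(\aa)|=k$ it forces $C=\dom(\bb)$ and hence $\aa\buildrel K\over\sim\bb$, contradicting that $\aa,\bb$ are distinct representatives. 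Combined with the $\buildrel K\over\sim$-invariance of $\bS$ (property 3 of a symmetry array), this shows $S_{\aa\wedge\bb}=S_{\gamma_{\aa\wedge\bb}}$ with $\gamma_{\aa\wedge\bb}\in J_{k-1}$.

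For the first assertion I would apply Proposition~\ref{prop:keytool} to the finite family $J_k$, obtaining that $(S_\aa:\aa\in J_k)$ are conditionally independent given $\GG_k:=\sigma(S_{\aa\wedge\bb}:\aa\neq\bb\in J_k)$. By the lemma, $\GG_k\subseteq\sigma(\bS_{k-1})$. Since moreover $J_{k-1}\subseteq J_k$, the generators of $\bS_{k-1}$ already belong to the conditionally independent family; hence the weak-union property lets me enlarge the conditioning from $\GG_k$ to $\GG_k\vee\sigma(\bS_{k-1})=\sigma(\bS_{k-1})$ while keeping the remaining variables conditionally independent. This gives exactly that $\bS_k$ is a conditionally independent family given $\bS_{k-1}$.

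For the ``in particular'' statement, write $N=|G|$, so that an $\alpha\in\N_K^G$ has $\dom(\alpha)=G$ and $J_N=\Gamma_K^G$. Applying the first assertion at level $k=N$ and singling out $S_\alpha$ yields $S_\alpha\underset{\bS_{N-1}}{\independent}(S_\gamma:\gamma\in\Gamma_K^G,\ \gamma\neq\alpha)$, which by $\buildrel K\over\sim$-invariance is the same as $S_\alpha\underset{\bS_{N-1}}{\independent}\bS\backslash\bS_{[\alpha]_K}$. It remains only to shrink the conditioning field from $\bS_{N-1}$ down to $\bS_{Restr'(\alpha,K)}$. I would do this through the auxiliary \emph{sufficiency} statement $S_\alpha\underset{\bS_{Restr'(\alpha,K)}}{\independent}\bS_{N-1}$: granting it, and noting the nesting $\bS_{Restr'(\alpha,K)}\subseteq\bS_{N-1}\subseteq\bS\backslash\bS_{[\alpha]_K}$, the conditional law of $S_\alpha$ given the largest field equals that given $\bS_{N-1}$, which in turn equals that given $\bS_{Restr'(\alpha,K)}$, and chaining these identities gives the corollary.

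The main obstacle is precisely this auxiliary reduction, since conditional independence does not in general permit shrinking the conditioning $\sigma$-field. I would establish it by induction on the level $k=|\dom(\alpha)|$: apply Proposition~\ref{prop:keytool} to $\{\alpha\}\cup J_{k-1}$ and split the resulting meet-field into the part coming from meets $\alpha\wedge\beta$ (which by the preceding Remark and the domain lemma all lie in $Restr'(\alpha,K)$, hence are measurable with respect to $\bS_{Restr'(\alpha,K)}$) and the ``sideways'' part coming from meets $\beta\wedge\beta'$ with $\beta,\beta'\in J_{k-1}$ (which live at strictly lower levels). The delicate point, and where the induction does its work, is showing that these sideways variables are conditionally independent of $S_\alpha$ given $\bS_{Restr'(\alpha,K)}$ and can therefore be discarded; verifying that the inductive hypothesis applies to them, after re-expressing each $S_{\beta\wedge\beta'}$ through its representative, is the step that requires the most care.
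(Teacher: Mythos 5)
Your handling of the first assertion is essentially correct, and it actually differs from (and in one respect improves on) the paper's argument: you retain the \emph{full} pairwise-meet field from Proposition \ref{prop:keytool} and then enlarge the conditioning to $\sigma(\bS_{k-1})$ by the graphoid property, which is legitimate precisely because the added variables $\bS_{k-1}$ belong to the conditionally independent family; your domain lemma (distinct representatives in $J_k$ have meets of level $\le k-1$) is the right combinatorial input and is indeed what the paper uses implicitly. Two repairs are needed but are routine: $J_k$ is countably infinite, not finite, so Proposition \ref{prop:keytool} must be applied to finite subfamilies $B \cup D$ with $D \subseteq J_{k-1}$ finite, and the conditioning field passed to $\sigma(\bS_{k-1})$ along an increasing limit via martingale convergence.

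The genuine gap is in the ``in particular'' statement. You derive it by \emph{shrinking} the conditioning from $\bS_{k-1}$ to $\bS_{Restr'(\alpha,K)}$, reducing everything to the auxiliary claim $S_\alpha \underset{\bS_{Restr'(\alpha,K)}}{\independent} \bS_{k-1}$ --- but that claim carries all the difficulty of the corollary itself, and the induction you propose cannot reach it. Your inductive hypothesis, being the same claim at levels $<k$, concerns $S_\beta$ conditioned on $\bS_{Restr'(\beta,K)}$; what the ``sideways'' meets $S_{\beta\wedge\beta'}$ require is a statement about the \emph{fixed top-level} variable $S_\alpha$ conditioned on $\bS_{Restr'(\alpha,K)}$, and re-expressing $\beta\wedge\beta'$ through its representative does not convert the one into the other. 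The missing ingredient is the structural fact on which the paper's proof rests: $\sigma(\bS_{Restr'(\alpha,K)}) \subseteq \sigma(S_\alpha)$, because $\FF_{\alpha'} \subseteq \FF_\alpha$ for every $\alpha' \in Restr'(\alpha,K)$ (permutations fixing $\alpha$ fix its restrictions). With this inclusion, Lemma \ref{lem: towering} permits \emph{enlarging} the conditioning in Proposition \ref{prop:keytool} from the meet field up to $\bS_{Restr'(\alpha,K)}$, since the enlargement stays inside the join of the meet field with $\sigma(S_\alpha)$; the paper therefore obtains the small-conditioning statement \emph{first}, for arbitrary finite families, and then gets the $\bS_{k-1}$-conditioned statement by a second application of Lemma \ref{lem: towering} (now enlarging within the other side, as $\bS_{Restr'(\alpha,K)} \subseteq \sigma(\bS_{k-1}) \subseteq \sigma(\bS\backslash\bS_{[\alpha]_K})$). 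Your proposal runs the implication in the opposite, and genuinely harder, direction, and the step you yourself flag as ``requiring the most care'' is exactly where it breaks down; even the repair (a downward induction, with $\alpha$ fixed, on the level of the family being conditioned away) cannot be carried out without invoking the inclusion $\sigma(\bS_{Restr'(\alpha,K)}) \subseteq \sigma(S_\alpha)$, which appears nowhere in your argument.
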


\begin{proof} Fix $\alpha \in J_k$, and let $A$ be an arbitrary finite subset of $J_k \backslash \{\aa\}$. By Proposition \ref{prop:keytool}, we have
\begin{equation*} S_\alpha \underset{(S_{\alpha \wedge \beta}: \beta \in A)}{\independent} (S_\beta : \beta \in A),
\end{equation*}
and since $\sigma(S_{\alpha \wedge \beta}: \beta \in A) \subseteq \sigma(\bS_{Restr'(\alpha,K)}) \subseteq \sigma(S_\aa)$, by Lemma \ref{lem: towering} we have
\begin{equation*} S_\alpha \underset{\bS_{Restr'(\alpha,K)}}{\independent} (S_\beta : \beta \in A).
\end{equation*}
Since $A$ is arbitrary, we have
\begin{equation}\label{eq: independence from the rest1} S_\alpha \underset{\bS_{Restr'(\alpha,K)}}{\independent} \bS \backslash \bS_{[\alpha]_K},
\end{equation}
and since $\bS_{Restr'(\alpha,K)} \in \sigma(\bS_{k-1}) \subseteq \sigma(\bS \backslash \bS_{[\alpha]_K})$, again by Lemma \ref{lem: towering} we have
\begin{equation}\label{eq: independence from the rest2} S_\alpha \underset{\bS_{k-1}}{\independent} \bS \backslash \bS_{[\alpha]_K}.
\end{equation}
Since $\alpha$ is arbitrary, the proof is complete.
\end{proof}

\begin{proof}[Proof of Theorem \ref{thm:main2}] We build an induction to show that for all $k \leq |G|$, there exists an i.i.d. array of uniform random variables $(U_\alpha : \alpha \in J_k)$ such that \eqref{eq: main2} holds for all $\alpha \in J_k$. (The case $k=0$ is obvious.) Let us assume that there exists an i.i.d. array $\bW_{k-1}:=(W_\alpha : \alpha \in J_{k-1})$ of uniform random variables and a family of measurable functions $(f_C: C \in \AA_G, |C| \leq k-1) $ such that almost surely,
\begin{equation}\label{eq: inductive hypothesis 4.6}
S_\alpha = f_{\dom(\alpha)}(\bS_{Restr'(\alpha)}, W_\alpha), \alpha \in J_{k-1}.
\end{equation}

Fix $\alpha \in N_k$, where $N_k:=J_k \backslash J_{k-1}$. By \eqref{eq: independence from the rest1} from Corollary \ref{cor: independence given the previous}, we have
\begin{equation*}
S_\alpha \underset{\bS_{Restr'(\alpha,K)}}{\independent} \bS_{k-1}.
\end{equation*}
Thus, by Lemma \ref{lem: conditional independence randomization}, there exists a uniform random variable $V_\alpha$ independent of $\bS_{k-1}$ such that
\begin{equation}\label{eq: coding 1, 4.6} S_\alpha = f_\alpha(\bS_{Restr'(\alpha)}, V_\alpha)
\end{equation}
almost surely. By exchangeability, there exists an array of uniform random variables $\mathbf{\partial V}_k:=(V_\alpha : \alpha \in N_k)$, which are not necessarily independent, such that \eqref{eq: coding 1, 4.6} holds for every $\alpha \in N_k$, with the choice of $f_\alpha$ identical for all $\alpha$ defined on the same domain, which we denote by $f_{\dom(\aa)}.$

Now consider an array $\mathbf{\partial W}_k:=(W_\alpha : \alpha \in N_k) $ of i.i.d. uniform random variables, which are also independent of $\bW_{k-1}$, and for $\alpha \in N_k$ define
\begin{equation}\label{eq: replacing S, 4.6}
    S_\alpha':= f_\alpha (\bS_{Restr'(\alpha)}, W_\alpha).
\end{equation} Since we can replace \eqref{eq: inductive hypothesis 4.6} and \eqref{eq: replacing S, 4.6} into equations of the form \eqref{eq: main3}, we can combine them into a one-line expression of the form
\begin{equation}\label{eq: one line 4.6}
\bS'_k = F(\bW_k),
\end{equation}
for some function $F$ where $\bS'_k = \Big( \bS_{k-1}, (S'_\alpha : \alpha \in N_k) \Big)$ and $\bW_k=(W_\alpha : \alpha \in J_k)$. Note that $\bW_k = (\bW_{k-1}, \mathbf{\partial W}_k)$ is an i.i.d. array. On the other hand, we have the following properties:
\begin{itemize}
    \item $\P[S_\alpha \in \cdot | \bS_{k-1}]=\P[S'_\alpha \in \cdot | \bS_{k-1}]$ almost surely for all $\alpha \in N_k$ since both $W_\aa$ and $V_\aa$ are independent of $\bS_{k-1}$.
    \item $(S_\alpha : \alpha \in N_k)$ is a conditionally independent family given $\bS_{k-1}$ by Corollary \ref{cor: independence given the previous}.
    \item $(S'_\alpha : \alpha \in N_k)$ is a conditionally independent family given $\bS_{k-1}$ by construction.
\end{itemize}
By Lemma \ref{lem: identification} we have $$\bS_k=\Big( \bS_{k-1}, (S_\alpha : \alpha \in N_k) \Big) \buildrel d \over = \Big( \bS_{k-1}, (S'_\alpha : \alpha \in N_k) \Big).$$ Thus, with \eqref{eq: one line 4.6}, we can apply Lemma \ref{lem: coding lemma2} to obtain an array of i.i.d. uniform random variables $\bU_k:=(U_\alpha : \alpha \in J_k)$ such that
\begin{equation}\label{eq: one line 4.6, 2}
\bS_k = F(\bU_k)
\end{equation}
almost surely. Splitting \eqref{eq: one line 4.6, 2} back to individual equations of the form \eqref{eq: main2}, we obtain the desired representation for dimension $k$. Since $k \leq |G|$ is arbitrary, we have \eqref{eq: main2} for all $\alpha \in \Gamma_K^G$ by induction. 
\end{proof}
\begin{proof}[Proof of Theorem \ref{thm:main}] Let $\alpha \in \N_G^K \cap \Gamma_K^G$. Since $X_\alpha$ is $\sigma(\bX)$-measurable and is invariant under permutations fixing $\alpha$, we have $X_\alpha \in \FF_\alpha$ and hence $X_\alpha=h(S_\alpha)$ for some measurable function $h$. By inserting \eqref{eq: main3}, we obtain \eqref{eq: main1} by identifying $I_K^G$ modulo $\buildrel K \over \sim$ with $\Gamma_K^G$. 

Let $U_\alpha:=U_{\gamma_\alpha}$ for $\aa \in \N_K^G$ (not necessarily in $\Gamma_K^G$ and $\bU:=(U_\aa : \aa \in I_K^G).$ To obtain \eqref{eq: main1} for all $\alpha \in \N_K^G$, it suffices to show that we can choose $\bU$ in a way such that $(\bX, \bU)$, or equivalently $(\bS, \bU)$, is $(G,K)$-exchangeable. Indeed, if $(\bX,\bU)$ is exchangeable, since for $\beta \in \N_K^G$ there exists $\alpha \in \N_G^K \cap \Gamma_K^G$ such that $\tau \alpha = \beta$ with $\tau \in Z_K(S_\N^{\rtimes G})$ by transitivity of the group action, \eqref{eq: main1} holds if we replace $\alpha$ with $\beta = \tau \alpha$ by exchangeability.

By Transfer Lemma \ref{lem: coding lemma}, there exists a family of measurable functions $(\phi_C : C \in \AA_G)$ such that for any $\alpha \in \Gamma_K^G$ and any uniform random variable $V$ independent of $S_\alpha$, we have 

\begin{equation}\label{eq: rev 1}
(S_\alpha, U_\alpha) \buildrel d \over = (S_\alpha, \phi_{\dom(\alpha)}(S_\alpha, V)).\footnote{We can choose $\phi_C$ to depend only on the domain by exchangeability of $\bS$.}
\end{equation}

 Let $\mathbf{V}=(V_\alpha : \alpha \in I_K^G)$ be an array of uniform random variables independent of $\bS$, where $V_\aa = V_{\gamma_\aa}$ for all $\aa \in I_K^G$ and different components are all independent. Then, by \eqref{eq: one line 4.6, 2} and \eqref{eq: rev 1}, we have $$S_\alpha = f_G(\bS_{Restr'(\aa)}, U'_\alpha)$$ almost surely for all $\alpha \in \Gamma_K^G$ where $U'_\alpha = \phi_{\dom(\alpha)}(S_\aa, V_\aa).$ (Note that $\bS_{Restr'(\aa)} \in \sigma(S_\aa)$.)

Since $\mathbf{V}$ is an i.i.d. array independent of $\bS$, we have
\begin{equation}\label{eq: 4.5, 1} U_\alpha' \underset{\bS}{\independent} (U'_\beta: \beta \in \Gamma_K^G \backslash \{\alpha\})
\end{equation}
for all $\alpha \in \Gamma_K^G$. Also, since $(U_\bb : \bb \in \Gamma_K^G)$ is an independent family and $\bS \backslash \bS_{[\aa]_K} \in \sigma(U_\bb : \bb \in \Gamma_K^G \backslash \{\aa\})$, we have
\begin{equation}\label{eq: 4.5, 2} U_\aa \underset{\bS \backslash \bS_{[\aa]_K}}{\independent} (U_\bb : \bb \in \Gamma_K^G \backslash \{\aa\}).
\end{equation}
Since $S_\aa \in \sigma(\bS\backslash S_\aa, U_\aa)$, by Lemma \ref{lem: towering} we obtain
\begin{equation}\label{eq: 4.5, 3} U_\aa \underset{\bS}{\independent} (U_\bb : \bb \in \Gamma_K^G \backslash \{\aa\}).
\end{equation}
Therefore by \eqref{eq: 4.5, 1} and \eqref{eq: 4.5, 3}, both $\Big((S_\alpha, U_\alpha) : \alpha \in \Gamma_K^G\Big)$ and $\Big((S_\alpha, U'_\alpha) : \alpha \in \Gamma_K^G\Big)$ are conditionally independent family given $\bS$.
Since $(S_\aa, U_\aa) \buildrel d \over = (S_\aa, U_\aa')$ and both are conditionally independent of $\bS$ given $S_\aa$, by Lemma \ref{lem: identification} we have
\begin{equation}\label{eq: 4.5, 4} (\bS, U_\aa) \buildrel d \over = (\bS, U_\aa')
\end{equation}
for all $\alpha \in \Gamma_K^G$. Therefore, by Lemma \ref{lem: identification}, we have
\begin{equation}\label{eq: 4.5, 5} \Big(\bS, (U_\aa : \aa \in I_K^G)\Big) \buildrel d \over = \Big(\bS, (U'_\aa : \aa \in I_K^G)\Big)
\end{equation}
where $U'_\aa := U'_{\gamma_\aa}$. Thus, the relations \eqref{eq: main2} still hold even if we replace $\bU$ by $\bU':=(U'_\aa : \aa \in I_K^G)$. Since $\bS$ and $\mathbf{V}$ are exchangeable and independent of each other, $(\bS, \mathbf{V})$ is exchangeable. Thus, by Lemma \ref{lem: joining exchangeability 1}, $(\bS, \bU')$ is exchangeable.
\end{proof}
\section{Proof of Proposition 2.10}

Let $\FF_\alpha = \sigma(S_\aa)$, as in Section \ref{section: symmetry}. 
\begin{lem}\label{lem: tail coding} For $\alpha \in I_K^G$, $n \in \N$, let $$\FF_\alpha^n := \sigma(X_\beta: \beta \in {\N}_K^G, \text{ there exists } C \in \AA_G \text{ such that } \beta|_C \in Restr(\alpha,K) \text{ and } \beta(v) \geq n \text{ for all } v \notin C).$$
Then, $\FF_\alpha = \underset{n \geq 1}{\bigcap} \FF_\alpha^n$. 
\end{lem}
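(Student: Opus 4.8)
The plan is to prove the two inclusions separately, after recording the elementary observation that the fields $\FF_\alpha^n$ are decreasing in $n$ (enlarging $n$ only shrinks the index set generating $\FF_\alpha^n$), so that $\bigcap_n \FF_\alpha^n$ is a genuine decreasing limit. Throughout write $\BB_\alpha^n$ for the set of indices $\beta$ generating $\FF_\alpha^n$, and recall that every event in a $\sigma$-field generated by a countable family $\{X_\beta\}$ is approximable in symmetric difference by events depending on finitely many of the $X_\beta$. The measure-theoretic content of both inclusions is routine approximation together with the facts that each $K$-commuting $\tau$ fixing $\alpha$ preserves $\P$ and carries the relation $\buildrel K \over \sim$ to itself; the real work is two combinatorial realization statements about $C_{K,\alpha}(S_\N^{\rtimes G})$.

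For $\FF_\alpha \subseteq \bigcap_n \FF_\alpha^n$ I would fix $A \in \FF_\alpha$ and $n$, and approximate $A$ within $\epsilon$ by $A_F \in \sigma(X_\beta : \beta \in F)$ for a finite $F \subseteq \N_K^G$. The key step is to produce $\tau \in C_{K,\alpha}(S_\N^{\rtimes G})$ with $\tau F \subseteq \BB_\alpha^n$. Since $\tau$ must fix $\alpha$, it cannot alter the values of any $\beta$ on the maximal closed subgraph $C_\beta$ on which $\beta$ agrees with $\alpha$ (up to $\buildrel K \over \sim$); but off $C_\beta$ only finitely many coordinate values occur across $F$, and using the infinitely many unused labels $\ge n$ one can define the constituent level-permutations of $\tau$ so as to push all those values above $n$ while fixing $\alpha$. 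Then $\tau A_F \in \FF_\alpha^n$, and since $A$ is $\tau$-invariant and $\tau$ is measure preserving, $\P(A \triangle \tau A_F) = \P(\tau A \triangle \tau A_F) = \P(A \triangle A_F) < \epsilon$. Letting $\epsilon \to 0$ gives $A \in \FF_\alpha^n$ for every $n$.

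For the reverse inclusion I would first prove a Hewitt--Savage-type statement: every $A \in \bigcap_n \FF_\alpha^n$ is fixed by every finitely supported $\sigma \in C_{K,\alpha}(S_\N^{\rtimes G})$, where ``finitely supported'' means that all level-permutations constituting $\sigma$ fix labels above some $N$. Indeed, approximating $A$ by generators drawn from $\BB_\alpha^{N+1}$, each such generator $\beta$ agrees with $\alpha$ (up to $\buildrel K \over \sim$) on some closed $C$ and has all coordinates off $C$ at least $N+1$; since $\sigma$ is $K$-commuting and fixes $\alpha$ it fixes $\beta|_C$ exactly, and being bounded by $N$ it fixes the off-$C$ coordinates as well, so $\sigma\beta = \beta$ and $\sigma$ fixes the approximant. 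The far-out estimate then yields $\P(\sigma A \triangle A) < 2\epsilon$, hence $\sigma A = A$. To pass from finitely supported $\sigma$ to an arbitrary $\tau \in C_{K,\alpha}(S_\N^{\rtimes G})$, approximate $A$ by $A_F$ on a finite $F$ and choose a finitely supported $\sigma \in C_{K,\alpha}(S_\N^{\rtimes G})$ that agrees with $\tau$ on $F$; then $\tau A_F = \sigma A_F$, and the chain $\tau A \approx \tau A_F = \sigma A_F \approx \sigma A = A$ gives $\P(\tau A \triangle A) < 2\epsilon$. As $\epsilon$ and $\tau$ are arbitrary, $A$ is $C_{K,\alpha}(S_\N^{\rtimes G})$-invariant, i.e. $A \in \FF_\alpha$.

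I expect the two combinatorial realization steps to be the main obstacle: constructing a $K$-commuting $G$-permutation that fixes $\alpha$ and either displaces a prescribed finite index set into $\BB_\alpha^n$, or reproduces a prescribed finite restriction of $\tau$ with bounded support. Both require building the constituent level-permutations simultaneously so that the result respects the defining relation \eqref{eq: G automorphism} of a $G$-permutation and commutes with every $\kappa \in K$ on local indices. The restriction of the index set to $\N_K^G$, on which $\buildrel K \over \sim$ singles out a unique representative in each class, is precisely what keeps these finite assignments mutually consistent and extendable to a global permutation; the remaining steps are standard approximation arguments.
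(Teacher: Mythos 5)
Your two-part skeleton coincides with the paper's own proof: for $\FF_\alpha \subseteq \bigcap_n \FF_\alpha^n$ the paper likewise displaces indices upward by a measure-preserving map fixing $\alpha$, and for the converse it likewise uses permutations of bounded support (its family $\TT_n$, which freezes any coordinate whose chain of ancestors carries a value above $n$) that fix the generators of $\FF_\alpha^n$ pointwise, letting $n \to \infty$. The approximation arguments you run on top of these constructions are standard and correct, modulo the usual convention that the $\sigma$-field identities are read up to null sets -- a caveat the paper's argument shares.

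The genuine gap is that the two ``combinatorial realization statements'' you defer are not peripheral: they \emph{are} the proof, and the paper's Section 3 consists almost entirely of carrying them out. Concretely, the paper constructs a single injection $\rho_\alpha : I_K^G \to I_K^G$ that fixes every index whose restriction lies in $Restr(\alpha,K)$ and shifts every other coordinate upward; the delicate point your sketch elides is that one cannot shift naively. If $\beta$ agrees with $\alpha$ (up to $\buildrel K \over \sim$) on $C_v \setminus \{v\}$ but not at $v$, then the new value of $\beta(v)$ must avoid the entire set $A_\alpha(w)$ of values that $\alpha$ takes at $K$-equivalent vertices with the same strict past; otherwise the image index acquires a \emph{new} agreement with $\alpha$, which is incompatible with \eqref{eq: G automorphism} for any permutation fixing $\alpha$, so the map could not agree on finite sets with any element of $C_{K,\alpha}(S_\N^{\rtimes G})$. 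This is exactly why the paper shifts via $\tau_{A_\alpha(w)}$ (skipping those values) rather than by $+1$. In your finite-set version with ``fresh labels above $n$'' the collision is easy to avoid, but the burden then shifts wholly onto the second unproven step: showing that your finite partial assignment (and, in the converse direction, the finite restriction $\tau|_F$) extends to a genuine element of $C_{K,\alpha}(S_\N^{\rtimes G})$, i.e.\ that the constituent level permutations can be completed consistently with the hierarchical constraint \eqref{eq: G automorphism}, with $K$-equivariance, and with fixing $\alpha$. The paper does not get this for free either: it writes the global map down explicitly, checks (``straightforward but tedious'') that finite restrictions extend to $Z_K(S_\N^{\rtimes G})$, and invokes the Kolmogorov extension theorem for law-invariance. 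Until you supply those constructions and the extension check, the proposal is an outline of the paper's proof rather than a proof.
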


Let us introduce some notations to be used in the proof. For $\aa,\bb \in I^G_K$ and $v \in G$, let $$A_\aa(v):=\Big\{\alpha(w) : C_w\backslash\{w\}=C_v\backslash\{v\}, \ w \buildrel K \over \sim v \Big\},$$
and for $B \subseteq \N$ we define the injection $\tau_B: \N \to \N$ as $$\tau_B(n) := \begin{cases} \min \{k > n : k \notin B\}, & n \notin B, \\ n, & n \in B. \end{cases}$$
In other words, $\tau_B$ fixes the numbers in $B$ and shifts the rest to the increasing direction.

We define $\rho_\aa$ to be an injective map of $I_K^G$ to itself as 
$$\rho_\aa(\beta)(v)=\begin{cases} \tau_{A_\aa(w)}(\beta(v)), & \bb|_{C_v\backslash \{v\}} \buildrel K \over \sim \aa|_{C_w\backslash \{w\}} , \\ \bb(v)+1, & \text{ otherwise.} \end{cases}$$
The choice of $w$ is irrelevant because we will always have $C_w\backslash\{w\}=C_u\backslash\{u\}$ for any $u \in \dom(\aa)$ such that $\bb|_{C_v\backslash \{v\}} \buildrel K \over \sim \aa|_{C_u \backslash \{u\}}$ (otherwise we do not have $\aa \in \N_K^G$). The evaluation divides into three cases:
\begin{enumerate}
    \item If $\kappa(\beta|_{C_v}) = \aa|_{C_w}$ for some $w \in \dom(\aa)$ and $\kappa \in K$ (or equivalently, $\beta|_{C_v} \in Restr(\aa,K)$), then $\rho_\aa(\bb)(v)=\bb(v).$
    \item If for some $w \in \dom(\aa)$ and $\kappa \in K_v$ we have $\kappa(\beta|_{C_v})(u) = \aa|_{C_w}(u)$ for all $u \in C_w \backslash \{w\}$ but $\kappa (\bb |_{C_v})(w) \neq \aa (w)$, we let $\rho_\aa(\bb)(v)=\bb(v)+\ell$, where $\ell$ is the smallest positive integer such that $\beta_{\ell,v}|_{C_v} \notin Restr(\aa,K)$ and
    $$\beta_{\ell,v}(u):=\beta(u) + \ell \mathbbm{1}_{\{u=v\}}.$$
    \item Otherwise, we have $\rho_\aa(\bb)(v)=\bb(v)+1$.
\end{enumerate}

It is straightforward (but tedious) to check that $\rho_\aa$ is an injection satisfying \eqref{eq: sep DAG exchangeability} in Definition \ref{def: sep DAG exch}, and that it commutes with $K$. Thus, any restriction of $\rho_\aa$ to a finite subset of $\N_G^K$ can be extended to an element in $Z_K(S_\N^{\rtimes G})$. Thus, by Kolmogorov extension theorem, the array $\bX$ law-invariant under the action of $\rho_\aa$. 

Also note that $\rho_\aa$ fixes $\aa$.

\begin{proof}[Proof of Lemma \ref{lem: tail coding}]  For any $\beta \in {\N}_K^G$, letting $C:=D_{\beta, \alpha}$, we have $\bb|_C \in Restr(\aa,K) $ (see the remark right before Proposition \ref{prop:keytool}). For $v \in C$, we have $\rho_\aa(\bb)(v)=\bb(v)$. On the other hand, for $v \notin C$, one can see that $\rho_\aa(\beta)(v) \geq \beta(v)+1$. Thus, we have $\rho_\aa(\bX) \in \FF_\alpha^2.$ Since $D_{\rho_\aa(\bb),\aa}=D_{\bb,\aa}$ for all $\bb \in \N_K^G$, we can similarly show that $\rho_\aa^n(\bb)(\bX) \in \FF_\aa^{n+1}$.

So, for arbitrary $E \in \FF_\alpha$, acting $\rho_\aa^n$ on both sides of the inclusion $E \in \sigma(\bX)$ we obtain $E \in \sigma(\rho_\aa^n(\bX)) \subseteq \FF_\alpha^{n+1}$. This shows that $\FF_\alpha \subseteq \underset{n \leq 1}{\bigcap} \FF_\alpha^n$.

To prove the converse, consider the collection $\TT_n$ of all finite permutations $\tau \in Z_K(S_\N^{\rtimes G})$ such that
\begin{enumerate}
    \item $\tau$ fixes $\alpha$. 
    \item for all $\beta$, $\tau(\beta)(v)=\beta(v)$ whenever there exists $u \preceq v$ such that $\beta(u) > n$.
\end{enumerate}
Then, as $n \to \infty$, the collection $\TT_n$ eventually contains all finite permutations in $Z_K(S_\N^{\rtimes G})$ fixing $\alpha$, and $\FF_\alpha^n$ is invariant under the action of $\TT_n$. Therefore, we have $\underset{n \leq 1}{\bigcap} \FF_\aa^n \subseteq \FF_\aa$.
\end{proof}

\begin{proof}[Proof of Proposition \ref{prop:keytool}]
We use induction on the number of indices $n$. The case $n=1$ is obvious.

Consider $\rho:=\rho_{\alpha_1}$ which is defined as in the proof of Lemma \ref{lem: tail coding}. As we have seen in the proof of Lemma \ref{lem: tail coding}, acting $\rho$ on $\beta$ fixes the values on $D_{\beta, \aa_1}$, which is by definition the values in the domain of $\beta \wedge \alpha_1$, and shifts all the values outside $D_{\beta, \aa_1}$ by at least +1. By exchangeability, we have $$\big(S_{\aa_1},...,S_{\aa_n}\big) \buildrel d \over = \big(S_{\aa_1}, \rho^k(S_{\aa_2}),..., \rho^k(S_{\aa_n})\big).$$ By Lemma \ref{lem: inclusive conditional indep}, we have
\begin{equation}\label{eq: key1}
S_{\alpha_1} \underset{\rho^k(S_{\aa_2}),..., \rho^k(S_{\aa_n})}{\independent} S_{\aa_2},...,S_{\aa_n}
\end{equation}
for all $k\in\N$. Since the indices generating $\rho^k(\FF_{\aa_j})$ are contained in the collection of the indices generating $\FF_{\aa_1 \wedge \aa_j}^k$, we have $\rho^k(S_{\aa_j}) \in \FF_{\aa_1 \wedge \aa_j}^k$. Thus, for all $k \geq 2$ we have
\begin{equation}\label{eq: key2}
S_{\alpha_1} \underset{\FF_{\alpha_1 \wedge \alpha_j}^k : 2 \leq j \leq n}{\independent} S_{\aa_2},...,S_{\aa_n}
\end{equation}
due to Lemma \ref{lem: inclusive conditional indep}. By the inductive hypothesis, $S_{\alpha_2},...,S_{\alpha_n}$ are conditionally independent given $(S_{\aa_i \wedge \aa_j} : 2 \leq i \neq j \leq n)$. Thus, by Lemma \ref{lem: tail coding} and Lemma \ref{lem: tail emersion}, the $\sigma$-field  generated by $\FF^k_{\aa_1 \wedge \aa_2},..., \FF^k_{\aa_1 \wedge {\aa_n}} $ decreases to some $\GG \subseteq {\sigma(\FF_{\aa_i \wedge \aa_j}} : 1 \leq i \neq j \leq n)$. Applying the backward martingale convergence to \eqref{eq: key2} as $k \to \infty$, we obtain
\begin{equation*}
    S_{\alpha_1} \underset{\GG}{\independent} S_{\aa_2},...,S_{\aa_n},
\end{equation*}
and since each $\FF_{\aa_i \wedge \aa_j}$ is a sub-$\sigma$-field of $S_{\alpha_i}$, by Lemma \ref{lem: towering} we obtain
\begin{equation}\label{eq: key3}
    S_{\alpha_1} \underset{(\FF_{\alpha_i \wedge \alpha_j} : 1 \leq i \neq j \leq n)}{\independent} S_{\aa_2},...,S_{\aa_n}.
\end{equation}
Since $S_{\alpha_2},...,S_{\alpha_n}$ are conditionally independent given $(\FF_{\aa_i \wedge \aa_j} : 2 \leq i \neq j \leq n)$ and for each $j \geq 2$, $\FF_{\aa_1 \wedge \aa_j}$ is a sub-$\sigma$-field of $\FF_{\aa_j}$, again by Lemma \ref{lem: towering} we have that $S_{\alpha_2},...,S_{\alpha_n}$ are conditionally independent given $(\FF_{\aa_i \wedge \aa_j} : 1 \leq i \neq j \leq n)$. Combining this with \eqref{eq: key3}, we obtain the desired result.
\end{proof}
\appendix
\section{Supplementary Lemmas}\label{app: supp}

Elementary results that we use in the main text are introduced in this section. All are standard results and frequently used in exchangeability theory. For those results without proofs we have added references where one can find the proofs. We note again that the richness of the probability space is always assumed.

\begin{lem}\label{lem: towering} Let $\GG$, $\FF_1,\FF_2,\FF_3$ be $\sigma$-fields satisfying $\FF_1 \subseteq \FF_2 \subseteq \FF_1 \vee \FF_3$ and
$$ \FF_3 \underset{\FF_1}{\independent} \GG$$
for some $\sigma$-field $\GG$. Then, we have
$$\FF_3 \underset{\FF_2}{\independent} \GG.$$
\end{lem}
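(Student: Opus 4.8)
The plan is to work through the standard functional characterization of conditional independence: for $\sigma$-fields $\HH$, $\FF$, $\GG$, one has $\FF \underset{\HH}{\independent} \GG$ if and only if $\E[g \mid \HH \vee \FF] = \E[g \mid \HH]$ for every bounded $\GG$-measurable random variable $g$. I would first rewrite both the hypothesis $\FF_3 \underset{\FF_1}{\independent} \GG$ and the target conclusion $\FF_3 \underset{\FF_2}{\independent} \GG$ in this form, so that the whole lemma reduces to an identity between conditional expectations.

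The engine of the argument is the purely set-theoretic observation that the sandwich $\FF_1 \subseteq \FF_2 \subseteq \FF_1 \vee \FF_3$ forces $\FF_2 \vee \FF_3 = \FF_1 \vee \FF_3$: the inclusion $\FF_1 \subseteq \FF_2$ gives $\FF_1 \vee \FF_3 \subseteq \FF_2 \vee \FF_3$, while $\FF_2 \subseteq \FF_1 \vee \FF_3$ gives the reverse inclusion, so the two joins coincide. With this identity in hand, I would fix a bounded $\GG$-measurable $g$. The hypothesis states $\E[g \mid \FF_1 \vee \FF_3] = \E[g \mid \FF_1]$, and replacing $\FF_1 \vee \FF_3$ by the equal $\sigma$-field $\FF_2 \vee \FF_3$ immediately yields $\E[g \mid \FF_2 \vee \FF_3] = \E[g \mid \FF_1]$.

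It then remains only to identify the right-hand side with $\E[g \mid \FF_2]$. For this I would apply the tower property along $\FF_2 \subseteq \FF_2 \vee \FF_3$, obtaining $\E[g \mid \FF_2] = \E[\E[g \mid \FF_2 \vee \FF_3] \mid \FF_2] = \E[\E[g \mid \FF_1] \mid \FF_2]$. Since $\E[g \mid \FF_1]$ is $\FF_1$-measurable and $\FF_1 \subseteq \FF_2$, it is already $\FF_2$-measurable, so the outer conditioning is vacuous and the expression collapses to $\E[g \mid \FF_1]$. Chaining the two displays gives $\E[g \mid \FF_2 \vee \FF_3] = \E[g \mid \FF_2]$ for every bounded $\GG$-measurable $g$, which is precisely $\FF_3 \underset{\FF_2}{\independent} \GG$.

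I do not expect a genuine obstacle: this is one of the elementary manipulation rules of conditional independence (a contraction-type property), and the whole proof is a few lines. The only point that needs a moment's care is recognizing that the two-sided inclusion collapses $\FF_2 \vee \FF_3$ and $\FF_1 \vee \FF_3$ into a single $\sigma$-field; once that is observed, the conclusion follows by one application of the tower property. If one wished to avoid committing to the functional characterization, the same computation can be carried out directly with $\GG$-measurable indicators, but the conditional-expectation bookkeeping above is the cleanest route.
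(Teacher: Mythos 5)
Your proof is correct and takes essentially the same route as the paper: the paper's entire proof is a one-line appeal to the tower property of conditional expectations, and your argument is exactly that appeal made explicit via the standard functional characterization of conditional independence. The two observations you supply --- the collapse $\FF_2 \vee \FF_3 = \FF_1 \vee \FF_3$ from the sandwich hypothesis, and the tower-property computation showing $\E[g \mid \FF_2] = \E[g \mid \FF_1]$ for bounded $\GG$-measurable $g$ --- are precisely the details the paper leaves to the reader.
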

\begin{proof} The proof is straightforward from the towering property of conditional expectations.
\end{proof}
\begin{lem}[Transfer Lemma: Theorem 6.10, \cite{kallenberg2002foundations}]\label{lem: coding lemma} Let $X,Y$ be random elements in a Borel space. Then,
\begin{enumerate}
    \item For all $X' \buildrel d \over = X$, there exists a measurable function $f$ such that whenever $W$ is a uniform random variable independent of $X'$, then $Y':=f(X',W)$ satisfies $(X,Y) \buildrel d \over = (X',Y').$
    \item There exist measurable functions $h$ and $g$ such that whenever $W$ is a uniform random variable independent of $X$ and $Y$, $V:=h(X,Y,W)$ is a uniform random variable independent of $X$ satisfying $Y=g(X,V)$ almost surely. 
\end{enumerate}
\end{lem}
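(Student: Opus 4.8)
The plan is to establish part (2) first, since the function $g$ it produces is exactly the map $f$ needed for part (1); part (1) then follows by a short distributional-transfer argument. Throughout I may assume, after composing with a Borel isomorphism between the value space of $Y$ and a Borel subset of $[0,1]$, that $Y$ is $[0,1]$-valued, and transport the final constructions back through this isomorphism; this reduction is harmless because Borel isomorphisms and their inverses are measurable.

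For part (2), the starting point is the existence of a regular conditional distribution $\mu(X,\cdot):=\P[Y \in \cdot \mid X]$, which is guaranteed since the spaces involved are Borel. Let $F(x,t):=\mu(x,[0,t])$ be the conditional distribution function and $F^{-1}(x,v):=\inf\{t : F(x,t)\geq v\}$ its generalized inverse, both jointly measurable. The crucial device is the atom-smoothing randomization
$$ V:=F(X,Y^-)+W\big(F(X,Y)-F(X,Y^-)\big), $$
where $W \sim U(0,1)$ is independent of $(X,Y)$ and $F(X,Y^-)=\mu(X,[0,Y))$ denotes the left limit. Setting $h(x,y,w):=F(x,y^-)+w\big(F(x,y)-F(x,y^-)\big)$ and $g:=F^{-1}$, I would verify the two defining properties. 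First, conditionally on $X$ the variable $V$ is uniform on $[0,1]$: the independent $W$ spreads each atom of mass $F(X,Y)-F(X,Y^-)=\mu(X,\{Y\})$ uniformly across the corresponding jump interval of $F(X,\cdot)$, while on the continuous part $F(X,\cdot)$ already pushes $\mu(X,\cdot)$ forward to Lebesgue measure; hence $V$ is uniform given $X$ and therefore uniform and independent of $X$. Second, $F^{-1}(X,V)=Y$ almost surely, by the standard quantile-inversion identity, valid here precisely because $V$ lands (a.s. strictly) inside the jump interval lying above $Y$. This gives $Y=g(X,V)$ a.s. with $V=h(X,Y,W)$, as required.

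For part (1), I would take $f:=g$ with the $g$ just built from the pair $(X,Y)$, and for any $X'\ed X$ and uniform $W$ independent of $X'$ set $Y':=f(X',W)$. Since $V\sim U(0,1)$ is independent of $X$ while $W\sim U(0,1)$ is independent of $X'$ and $X'\ed X$, the pairs satisfy $(X,V)\ed(X',W)$; applying the measurable map $(x,v)\mapsto(x,g(x,v))$ to both sides preserves equality in law, so $(X,g(X,V))\ed(X',g(X',W))$. As $g(X,V)=Y$ a.s. by part (2), the left-hand side is $(X,Y)$ in distribution, whence $(X,Y)\ed(X',Y')$, which is the assertion.

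The main obstacle is the verification inside part (2) that the smoothed variable $V$ is \emph{exactly} uniform conditionally on $X$ and that the generalized inverse recovers $Y$ almost surely; this requires a clean case split between the atomic and continuous parts of $\mu(X,\cdot)$, together with checking the joint measurability of $F$, $F^{-1}$, and $h$ so that $V$, $g$, and $h$ are bona fide measurable functions. By contrast, the Borel reduction and the transfer step in part (1) are routine once the conditional-quantile representation is in hand.
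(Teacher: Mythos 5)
Your proof is correct. The paper gives no proof of this lemma---it is quoted as a standard result from Kallenberg's \emph{Foundations of Modern Probability} (Theorem 6.10)---and your argument (regular conditional distributions, the conditional quantile transform with atom-smoothing randomization to handle atoms exactly, then part (1) deduced from part (2) by pushing the distributional identity $(X,V) \buildrel d \over = (X',W)$ forward through $(x,v)\mapsto(x,g(x,v))$) is essentially the same proof found in that cited reference.
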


\begin{lem}[Corollary 6.11, \cite{kallenberg2002foundations}]\label{lem: coding lemma2} Let $X$, $Y$ be Borel-valued random variables such that $X \buildrel d \over = f(Y)$ for some measurable function $f$. Then, there exists a random variable $Y' \buildrel d \over = Y$ such that $X=f(Y')$ almost surely.
\end{lem}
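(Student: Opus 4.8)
The plan is to deduce this from part 1 of the Transfer Lemma (Lemma \ref{lem: coding lemma}). The guiding idea is to manufacture a pair of random elements in which the relation ``$X = f(Y)$'' holds identically, and then transfer that pair onto a copy built from the prescribed $X$, exploiting that this relation is a measurable (hence distributional) event and therefore survives an equality in law.

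First I would set $Z := f(Y)$, so that $(Z, Y)$ is a pair of Borel-valued random elements with $Z = f(Y)$ holding surely, and in particular $\P(Z = f(Y)) = 1$. By hypothesis $X \buildrel d \over = f(Y) = Z$. I then apply part 1 of Lemma \ref{lem: coding lemma} with the roles of its ``$X$'' and ``$Y$'' played by $Z$ and $Y$, respectively, and with the prescribed copy taken to be $X$ itself (legitimate since $X \buildrel d \over = Z$). This produces a measurable function $g$ such that, for any uniform random variable $W$ independent of $X$ (available by richness of the probability space), the random element $Y' := g(X, W)$ satisfies $(X, Y') \buildrel d \over = (Z, Y)$.

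Next I would argue that the transferred pair inherits the defining relation. Consider the set $A := \{(x,y) : x = f(y)\}$ in the product of the two state spaces. Since both factors are Borel spaces and $f$ is measurable, the diagonal is measurable and $A$ is the preimage of the diagonal under the measurable map $(x,y) \mapsto (x, f(y))$, so $A$ is measurable. By construction $\P((Z,Y) \in A) = 1$, whence $(X, Y') \buildrel d \over = (Z, Y)$ forces $\P((X, Y') \in A) = 1$, i.e. $X = f(Y')$ almost surely. Projecting the same distributional identity onto its second coordinate yields $Y' \buildrel d \over = Y$, which completes the argument.

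The only genuine subtleties, rather than obstacles, are the measurability of the event $A$ --- which is precisely where the Borel-space hypothesis enters --- and the existence of a uniform randomizer $W$ independent of $X$, guaranteed by the standing assumption that the underlying probability space is rich enough. Everything else is a direct transcription of the transfer statement, so I expect no real difficulty beyond bookkeeping the substitution of roles in Lemma \ref{lem: coding lemma}.
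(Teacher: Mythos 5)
Your proof is correct: the paper itself gives no argument for this lemma, deferring to the citation, and your derivation is exactly how the cited source proves it --- Kallenberg obtains Corollary 6.11 from Theorem 6.10 (the paper's Transfer Lemma, Lemma \ref{lem: coding lemma}) by transferring the pair $(f(Y),Y)$ onto $X$ and noting that the almost-sure relation $\{(x,y): x=f(y)\}$ is a measurable, hence distribution-preserved, event in a product of Borel spaces. Both subtleties you flag (measurability of the diagonal and the need for an independent uniform randomizer) are handled correctly and are covered by the paper's standing richness assumption.
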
 
\begin{lem}[Lemma 1.3, \cite{kallenberg2006probabilistic}]\label{lem: inclusive conditional indep} Let $X,Y,Z$ be random variables such that $(X,Y) \buildrel d \over = (X,Z)$ and $\sigma(Y) \subseteq \sigma(Z)$. Then, $X \underset{Y}{\independent} Z$.
\end{lem}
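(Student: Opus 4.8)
The plan is to reduce the conditional independence to an $L^2$ identity for conditional expectations and then exploit the distributional hypothesis to make that identity collapse. Since $\sigma(Y) \subseteq \sigma(Z)$, we have $\sigma(Y) \vee \sigma(Z) = \sigma(Z)$, so the assertion $X \underset{Y}{\independent} Z$ is equivalent to showing that $\E[h(X) \mid Z] = \E[h(X) \mid Y]$ almost surely for every bounded measurable real-valued function $h$ on the range of $X$. This is the statement I would aim to establish.

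The key observation is that the two conditional expectations are governed by one and the same function. Because $(X,Y) \buildrel d \over = (X,Z)$, for any bounded measurable $k$ we have $\E[h(X) k(Y)] = \E[h(X) k(Z)]$, and the marginal equality $Y \buildrel d \over = Z$ gives $\E[g(Y)] = \E[g(Z)]$ for all bounded measurable $g$. Fixing a version $p$ of the conditional expectation defined from the law of $(X,Y)$, so that $\E[h(X) k(Y)] = \E[p(Y) k(Y)]$ for all $k$, one checks using these two distributional identities that the \emph{same} $p$ also satisfies $\E[h(X) k(Z)] = \E[p(Z) k(Z)]$ for all $k$. Hence $\E[h(X)\mid Y] = p(Y)$ and $\E[h(X)\mid Z] = p(Z)$ almost surely, with $p$ a common, bounded, measurable function. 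The Borel-space assumption is what guarantees that such conditional expectations are represented by genuine measurable functions of the conditioning variable.

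Having reduced everything to showing $p(Z) = p(Y)$ almost surely, I would run a short second-moment computation. The tower property together with $\sigma(Y)\subseteq\sigma(Z)$ gives $\E[p(Z)\mid Y] = p(Y)$, so that $\E[p(Z)p(Y)] = \E[p(Y)^2]$. On the other hand, $Y \buildrel d \over = Z$ and boundedness of $p$ give $\E[p(Z)^2] = \E[p(Y)^2]$. Expanding,
$$\E\big[(p(Z)-p(Y))^2\big] = \E[p(Z)^2] - 2\E[p(Z)p(Y)] + \E[p(Y)^2] = \E[p(Z)^2] - \E[p(Y)^2] = 0,$$
whence $p(Z) = p(Y)$ almost surely. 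Since $h$ was arbitrary, $\E[h(X)\mid Z] = \E[h(X)\mid Y]$ for all bounded measurable $h$, which is exactly the desired conditional independence.

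I expect the main obstacle to be not the computation but the bookkeeping behind the ``same function $p$'' claim: one must be careful that $p$ can be chosen as a single measurable function serving as a version of the conditional expectation for both $Y$ and $Z$, and that the various almost-sure identities hold off a common null set. This is precisely where the equality in distribution $(X,Y)\buildrel d \over =(X,Z)$ and the Borel (hence well-behaved) structure of the state spaces are essential; once $p$ is pinned down as a common version, the remainder is the routine $L^2$ argument above.
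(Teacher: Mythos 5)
Your proof is correct. The paper does not actually prove this lemma---it is stated in Appendix A with a citation to Lemma 1.3 of Kallenberg's \emph{Probabilistic Symmetries and Invariance Principles}---and your argument (fixing a single measurable version $p$ serving as $\E[h(X)\mid Y]=p(Y)$ and $\E[h(X)\mid Z]=p(Z)$, then using the tower property and equality of marginal laws in an $L^2$ computation to force $p(Z)=p(Y)$ almost surely) is essentially the standard proof found in that reference, so there is nothing to correct.
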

\begin{lem}[Proposition 5.13, \cite{kallenberg2002foundations}]\label{lem: conditional independence randomization} Let $X,Y,Z$ be random elements, where $X$ lies in a Borel space. Then, $X$ is conditionally independent of $Z$ given $Y$ if and only if there exists a measurable function $f$ and a uniform random variable $U$ independent of $Y,Z$ such that $X=f(Y,U)$ almost surely.
\end{lem}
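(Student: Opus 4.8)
The plan is to prove the two implications separately, the substantive content lying in the forward direction. Since $X$ takes values in a Borel space, I may compose with a Borel isomorphism and assume throughout that $X$ is $[0,1]$-valued; this reduction is harmless because conditional independence and the existence of a representation $X=f(Y,U)$ are both preserved under measurable isomorphisms of the state space of $X$.

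For the easy implication, suppose $X=f(Y,U)$ for some measurable $f$ and a uniform $U$ independent of $(Y,Z)$. For any bounded measurable $g$, independence of $U$ from $(Y,Z)$ yields
$$\E[g(X)\mid Y,Z] = \E[g(f(Y,U))\mid Y,Z] = \int_0^1 g(f(Y,u))\,du$$
almost surely, and the right-hand side is $\sigma(Y)$-measurable. Hence $\E[g(X)\mid Y,Z]=\E[g(X)\mid Y]$ for every such $g$, which is exactly the assertion $X \underset{Y}{\independent} Z$.

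For the forward direction, assume $X \underset{Y}{\independent} Z$ and let $\mu_Y$ be a regular conditional distribution of $X$ given $Y$. The conditional independence hypothesis says precisely that $\mu_Y$ is also a version of the regular conditional distribution of $X$ given $(Y,Z)$, i.e. $\P[X\in B\mid Y,Z]=\mu_Y(B)$ a.s. for every Borel $B\subseteq[0,1]$. Using the richness of the probability space, I introduce an auxiliary uniform variable $V$ independent of $(X,Y,Z)$ and form the randomized distributional transform
$$U := F_Y(X-) + V\,\mu_Y(\{X\}), \qquad F_Y(x):=\mu_Y\big((-\infty,x]\big).$$
Setting $f(y,u):=\inf\{x : F_y(x)\geq u\}$ for the generalized conditional quantile function, which is jointly measurable by the usual disintegration bookkeeping, the standard inversion property of the distributional transform gives $X=f(Y,U)$ almost surely.

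The main work, and the step where conditional independence is indispensable, is verifying that $U$ is uniform and independent of the \emph{full} pair $(Y,Z)$, not merely of $Y$. I would argue this by conditioning: given $(Y,Z)=(y,z)$, the pair $(X,V)$ has law $\mu_y\otimes\mathrm{Unif}[0,1]$, since $\P[X\in\cdot\mid Y,Z]=\mu_y$ by the hypothesis while $V$ remains uniform and independent because $V\perp(X,Y,Z)$. The distributional-transform lemma then shows that the conditional law of $U$ given $(Y,Z)=(y,z)$ is $\mathrm{Unif}[0,1]$ for \emph{every} $(y,z)$; as this conditional law does not depend on $(y,z)$, it follows that $U$ is uniform and $U\perp(Y,Z)$. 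The key point is that without $X \underset{Y}{\independent} Z$ the relevant conditional law of $X$ would be some $\mu_{Y,Z}$ genuinely depending on $Z$, and the transform built from the $Y$-conditional $\mu_Y$ would fail to be uniform given $(Y,Z)$; conditional independence is exactly what collapses $\mu_{Y,Z}$ to $\mu_Y$. I expect the only delicate part to be the measurability and regularity of $\mu_Y$, $F_Y$, and $f$, which is routine given the Borel structure. Alternatively, one can route the forward direction through the Transfer Lemma (Lemma \ref{lem: coding lemma}) applied to the pair $((Y,Z),X)$, but one must then invoke the conditional independence once more to discard the spurious $Z$-dependence in the resulting representation, so the distributional-transform argument is the cleaner path.
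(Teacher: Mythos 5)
Your proof is correct in both directions, but it is worth noting that the paper itself contains no proof of this lemma at all: it is imported by citation from Kallenberg's \emph{Foundations of Modern Probability}, so the relevant comparison is with Kallenberg's argument. There, the forward direction runs through the transfer machinery that the paper records as Lemma \ref{lem: coding lemma}: applying part (2) to the pair $(Y,X)$ yields a uniform $V=h(Y,X,W)$ with $V \independent Y$ and $X=g(Y,V)$ almost surely, and the hypothesis $X \underset{Y}{\independent} Z$ is then spent on upgrading the independence, via the chain rule for conditional independence, from $V \independent Y$ together with $V \underset{Y}{\independent} Z$ to the full $V \independent (Y,Z)$ --- exactly the ``laundering out of the spurious $Z$-dependence'' you anticipate in your closing remark. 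You instead construct $U$ by hand as the randomized distributional transform $F_Y(X-)+V\,\mu_Y(\{X\})$ of a regular conditional distribution $\mu_Y$, and you spend the hypothesis at the one place it is genuinely needed: it makes $\mu_Y$ a version of $\P[X\in\cdot\mid Y,Z]$, so that conditionally on $(Y,Z)$ the pair $(X,V)$ has law $\mu_y\otimes\mathrm{Unif}[0,1]$, the transform is conditionally uniform, and quantile inversion gives $X=f(Y,U)$. Your route buys an explicit, self-contained construction of $f$ and $U$ at the cost of the Borel-isomorphism reduction to $[0,1]$ and the joint-measurability bookkeeping for $F_y$ and the conditional quantile $f(y,u)=\inf\{x : F_y(x)\geq u\}$ (which you correctly flag as routine: $f(y,u)\leq t$ iff $F_y(t)\geq u$ by right-continuity); Kallenberg's route is shorter given the transfer theorem but leaves $f$ and $U$ implicit. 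Two cosmetic points: ``for \emph{every} $(y,z)$'' should read ``for $\P\circ(Y,Z)^{-1}$-almost every $(y,z)$,'' and in the easy direction the step from $\E[g(X)\mid Y,Z]=\E[g(X)\mid Y]$ for all bounded measurable $g$ to $X \underset{Y}{\independent} Z$ is the standard Doob-type characterization of conditional independence, which deserves a one-line justification; finally, your appeal to richness of the probability space for the auxiliary uniform $V$ is legitimate here, since the paper assumes exactly this throughout.
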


\begin{lem}\label{lem: joining exchangeability 1} Let $H$ be a group acting measurably on Borel spaces $\XX$ and $\YY$, and let $\mu$ be an $H$-invariant probability measure on $\XX$, that is, $x$ is $H$-exchangeable under $\mu$. Let $\phi: \XX \to \YY$ be a measurable function. If $\phi(\tau x)=\tau \phi(x)$ $\mu$-almost surely for all $\tau \in H$, then $(x, \phi(x))$ is $H$-exchangeable under $\mu$.
\end{lem}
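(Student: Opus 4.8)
The plan is to verify the defining invariance of $H$-exchangeability for the pair $(x,\phi(x))$ one group element at a time. Here $(x,\phi(x))$ being $H$-exchangeable means precisely that its law on $\XX\times\YY$ is invariant under the diagonal action $\tau\cdot(x,y)=(\tau x,\tau y)$; that is, for each fixed $\tau\in H$ we must establish
$$(x,\phi(x)) \ed (\tau x,\tau\phi(x)).$$
The natural device is the graph map $\Phi:\XX\to\XX\times\YY$ given by $\Phi(x)=(x,\phi(x))$, which is measurable because $\phi$ is. Fix $\tau\in H$ for the remainder of the argument.

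First I would use the $H$-invariance of $\mu$: if $x$ has law $\mu$, then $\tau x$ has law $\tau_*\mu=\mu$, so $\tau x\ed x$. Applying the single fixed measurable map $\Phi$ to both sides preserves equality in distribution, which gives
$$\Phi(\tau x)\ed\Phi(x),\qquad\text{i.e.}\qquad (\tau x,\phi(\tau x))\ed(x,\phi(x)).$$
Next I would invoke the equivariance hypothesis, which supplies $\phi(\tau x)=\tau\phi(x)$ $\mu$-almost surely for this $\tau$. Since almost-sure equality of the second coordinate yields almost-sure equality of the pairs, we have $(\tau x,\phi(\tau x))=(\tau x,\tau\phi(x))$ a.s., and in particular these two pairs agree in distribution. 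Chaining the two relations produces $(x,\phi(x))\ed(\tau x,\tau\phi(x))$, which is exactly the required identity. As $\tau\in H$ was arbitrary, $(x,\phi(x))$ is $H$-exchangeable under $\mu$.

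I do not expect a serious obstacle here: the argument is essentially immediate, and the only points demanding care are bookkeeping ones. One must keep the direction of the pushforward straight when passing from invariance of $\mu$ to $\tau x\ed x$, and one must note that the equivariance relation is assumed only for each \emph{individual} $\tau$ almost surely (not necessarily simultaneously over all of $H$ on a common null set), which is all that is needed precisely because invariance of the law is checked separately for each $\tau$. Everything else reduces to the standard fact that a measurable pushforward respects equality in distribution.
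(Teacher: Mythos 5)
Your proof is correct and follows essentially the same route as the paper: its one-line proof, $\mu[x \in A,\, \phi(x) \in B]=\mu[\tau x \in A,\, \phi(\tau x) \in B] = \mu[\tau x \in A,\, \tau \phi(x) \in B]$, is exactly your two steps (push the $H$-invariance of $\mu$ through the graph map, then apply the almost-sure equivariance), just written on generating rectangles $A \times B$ rather than directly as equality of laws. Your observation that the equivariance null set may depend on $\tau$ is left implicit in the paper, but since both arguments check invariance one $\tau$ at a time, nothing more is needed.
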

\begin{proof} $\mu[x \in A, \phi(x) \in B]=\mu[\tau x \in A, \phi(\tau x) \in B] = \mu[\tau x \in A, \tau \phi(x)].$
\end{proof}

\begin{lem}\label{lem: identification} Let $(X_i : i \in I)$, $(Y_i : i \in I)$ be a family of random variables with a countable index set $I$. For a random variable $S$, assume that the following are true:
\begin{itemize}
    \item $(S, X_i) \buildrel d \over = (S, Y_i)$. Equivalently, $\P[X_i \in \cdot | S] = \P[Y_i \in \cdot | S]$ almost surely.
    \item Given $S$, Both $(X_i: i \in I)$ and $(Y_i : i \in I)$ are conditionally independent families.
\end{itemize}
Then, we have $(S, X_i : i \in I) \buildrel d \over = (S, Y_i : i \in I).$
\end{lem}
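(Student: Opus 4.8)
The plan is to reduce the asserted identity between the laws of the two countable families to a collection of finite-dimensional identities, and to establish each of those by combining the conditional-independence factorization with the matching one-dimensional conditional laws, then integrating out $S$.

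First I would record that, since $I$ is countable and all variables take values in Borel spaces, the law of a family $(Z_i : i \in I)$ is determined by its finite-dimensional marginals: the cylinder events form a $\pi$-system generating the product $\sigma$-field, so by Dynkin's $\pi$-$\lambda$ theorem it suffices to prove
$$(S, X_{i_1}, \ldots, X_{i_n}) \buildrel d \over = (S, Y_{i_1}, \ldots, Y_{i_n})$$
for every finite subset $\{i_1, \ldots, i_n\} \subseteq I$. Fix such a subset. Working with regular conditional distributions given $S$ (available because the spaces are Borel), the second hypothesis lets me factor the conditional joint law: for Borel sets $A_1, \ldots, A_n$,
$$\P[X_{i_1} \in A_1, \ldots, X_{i_n} \in A_n \mid S] = \prod_{k=1}^n \P[X_{i_k} \in A_k \mid S]$$
almost surely, and identically for the $Y$'s. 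The first hypothesis gives $\P[X_{i_k} \in A_k \mid S] = \P[Y_{i_k} \in A_k \mid S]$ almost surely for each $k$, so the two products agree almost surely, whence the conditional joint laws of $(X_{i_1}, \ldots, X_{i_n})$ and of $(Y_{i_1}, \ldots, Y_{i_n})$ given $S$ coincide almost surely. Integrating against an arbitrary bounded measurable test function $g(S)$ then yields
$$\E\big[ g(S)\, \mathbbm{1}_{\{X_{i_1} \in A_1, \ldots, X_{i_n} \in A_n\}} \big] = \E\Big[ g(S) \prod_{k=1}^n \P[X_{i_k} \in A_k \mid S] \Big],$$
whose right-hand side is unchanged upon replacing every $X$ by the corresponding $Y$. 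This establishes equality of the two finite-dimensional laws on the generating $\pi$-system of rectangles, and the $\pi$-$\lambda$ theorem upgrades it to equality of the full joint laws; combined with the reduction above, the lemma follows.

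The measure-theoretic bookkeeping—existence of regular conditional distributions, measurability of the integrands, and the extension from rectangles to the full $\sigma$-field—is entirely standard. The only point requiring genuine care is the coherent handling of the almost-sure statements: the factorization and the factor-matching each hold only off null sets, so for the fixed finite subset I would take the (finite, hence null) union of the exceptional sets and carry out the product identity off that single null set before integrating. Countability of $I$ guarantees this causes no difficulty, since it also lets one arrange that $\P[X_i \in \cdot \mid S] = \P[Y_i \in \cdot \mid S]$ holds for all $i \in I$ simultaneously outside one null set.
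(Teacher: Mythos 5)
Your proof is correct and takes essentially the same approach as the paper: condition on $S$, factor the joint conditional law of a finite subfamily using conditional independence, replace each factor $\P[X_{i_k} \in A_k \mid S]$ by $\P[Y_{i_k} \in A_k \mid S]$ using the first hypothesis, recombine, and integrate out $S$. The paper's proof is just a compressed version of this, written with bounded test functions $f_1,\dots,f_n$ instead of indicators of rectangles and leaving the reduction to finite-dimensional marginals and the $\pi$-$\lambda$ bookkeeping implicit.
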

\begin{proof} Without loss of generality, let $I=\N$. Then for $n \in \N$ and bounded measurable functions $f_1,...,f_n$,
\begin{align*} \E[f_1(X_1) \cdots f_n(X_n)|S]  & = \E[f_1(X_1)|S] \cdots \E[f_n(X_n)|S] \\ & = \E[f_1(Y_1)|S] \cdots \E[f_n(Y_n)|S] = \E[f_1(Y_1) \cdots f_n(Y_n)|S].
\end{align*}
\end{proof}

\begin{lem}\label{lem: tail emersion}
For each $n \in \N$, let $(\FF_k^n : k\in \N)$ be a sequence of decreasing $\sigma$-fields with $\FF^n:=\underset{k \geq 1}{\bigcap}\FF_k^n$. Assume that given $\GG$, the family $(\FF_1^n: n\in \N)$ is independent.

Then, $\FF:=\underset{k \in \N}{\bigcap}(\underset{n \in \N}{\vee} \FF_k^n) $ is a sub-$\sigma$-field of $\GG \underset{n \in \N}{\vee} \FF^n$.\footnote{Without the conditional independence, we cannot guarantee the result. Consider two sequences of random variables $\bX=(X_n : n \in \N)$ and $\bY=(Y_n : n \in \N)$, and let $P$ be a uniform random variable. Suppose that given $P$, $\bX$ and $\bY$ are independent i.i.d. sequences, where $\P[X_1 = 1 | P] = 1-\P[X_1 = -1| P]=P$ and $\P[Y_1 = 1 | P] = 1-\P[Y_1 = -1| P]=1/2$. Let $Z_n := X_n Y_n$. Then, $\bY$ and $\mathbf{Z}$ are i.i.d. sequences of random variables independent of $P$, and hence their tail $\sigma$-fields are trivial. However, since the tail $\sigma$-field of the joint sequence $(\bY, \mathbf{Z})$ recovers $P$, and hence it is not equal to the join of the tail $\sigma$-fields of the components.} In particular, if $(\FF_1^n: n\in \N)$ are unconditionally independent, then $\FF= \underset{n \in \N}{\vee}\FF^n.$
\end{lem}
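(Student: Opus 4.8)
The plan is to fix an arbitrary $A\in\FF$ and show that $\mathbbm{1}_A$ is measurable with respect to $\HH:=\GG\vee\bigvee_n\FF^n$; since $A$ is arbitrary this yields $\FF\subseteq\HH$, and the unconditional ``in particular'' statement follows by taking $\GG$ trivial (the reverse inclusion $\bigvee_n\FF^n\subseteq\FF$ being immediate from $\FF^n\subseteq\FF_k^n\subseteq\bigvee_m\FF_k^m$ for every $k$). I would work throughout in $L^2$, so that the target becomes the identity $\mathbbm{1}_A=\E[\mathbbm{1}_A\mid\HH]$.

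Two reductions come first. Since sub-$\sigma$-fields of a conditionally independent family are again conditionally independent, for every choice of indices the fields $(\FF_{k_n}^n)_n$ — in particular $(\FF^n)_n$ — are conditionally independent given $\GG$. Conditionally on $\GG$, the space $L^2(\GG\vee\bigvee_n\FF_1^n)$ then carries a Hoeffding/ANOVA decomposition indexed by the finite sets $S\subseteq\N$: every $p$ splits orthogonally as $p=\sum_S p_S$, where $p_S$ is the pure $S$-interaction, i.e. it is $\GG\vee\bigvee_{n\in S}\FF_1^n$-measurable with vanishing conditional mean (given $\GG$ and the other coordinates) in each $n\in S$, and $p_\emptyset=\E[p\mid\GG]$. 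The key compatibility I would then verify is that for $p=\mathbbm{1}_A\in\bigcap_k\bigvee_n\FF_k^n$ and any finite $T$, the conditional expectation $\E[p\mid\GG\vee\bigvee_{n\in T}\FF_k^n]$ is independent of $k$: conditioning integrates out the coordinates outside $T$ (conditionally independent given $\GG$), while the coordinates in $T$ enter $p$ only through their $\FF_k^n$-data for every $k$, and refining from $\FF_{k'}^n$ to $\FF_k^n$ within a single coordinate adds nothing because $p$ is already measurable at the coarser level. Hence the ANOVA projections are level-independent, and each $p_S$ is $\GG\vee\bigvee_{n\in S}\FF_k^n$-measurable and conditionally centred in each $n\in S$, simultaneously for all $k$.

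The main step is to show that such a $p_S$ lies in $L^2(\GG\vee\bigvee_{n\in S}\FF^n)\subseteq L^2(\HH)$; summing the orthogonal series $\sum_S p_S$ then gives $p\in L^2(\HH)$. Disintegrating over $\GG$, the $S$-interaction subspace at level $k$ is fibrewise the tensor product $\bigotimes_{n\in S}L^2_0(\FF_k^n)$ of the mean-zero fibres, and these decrease with $k$. The elementary Hilbert-space fact I would invoke is that if $P_k^n\downarrow P^n$ are decreasing orthogonal projections (here $P_k^n=\E[\cdot\mid\GG\vee\FF_k^n]$ with its $\GG$-part removed, convergent by reverse martingale convergence), then $\bigotimes_{n\in S}P_k^n\to\bigotimes_{n\in S}P^n$ strongly; since $p_S$ is fixed by every $\bigotimes_{n\in S}P_k^n$, it is fixed by $\bigotimes_{n\in S}P^n$, i.e. $p_S\in\bigotimes_{n\in S}L^2_0(\FF^n)$.

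The hard part, and the only place the hypothesis is genuinely used, is precisely this last step: commuting the decreasing intersection over $k$ past the join over $n$. Without conditional independence the product/tensor structure is unavailable and the conclusion fails, exactly as the footnote's counterexample shows; the role of the conditional independence is to force the coordinatewise reverse-martingale limits to multiply, which is what the strong convergence $\bigotimes_n P_k^n\to\bigotimes_n P^n$ encodes. The remaining work is bookkeeping: justifying the disintegration over $\GG$ (legitimate since the underlying spaces are Borel) and controlling the convergence of the orthogonal ANOVA series, so that the membership of each $p_S$ in the closed subspace $L^2(\HH)$ transfers to $p$, completing the proof that $\FF\subseteq\HH$.
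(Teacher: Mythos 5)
Your proposal takes a genuinely different route from the paper's. The paper works on the generating $\pi$-system of events $A\cap A_1\cap\cdots\cap A_N$ with $A\in\GG$, $A_j\in\FF_1^j$, factorizes $\P[\,\cdot\mid \GG,\FF_k^n:n\in\N]$ on such events into $\mathbbm{1}_A\prod_{j\leq N}\P[A_j\mid \FF_k^j,\GG]$ by conditional independence, lets $k\to\infty$ in each factor by backward martingale convergence, and finishes with a $\pi$--$\lambda$ argument; you instead build a conditional Hoeffding/ANOVA decomposition and pass to the limit through tensor products of decreasing projections. Much of your outline is sound: the grouping argument behind the level-independence of $\E[p\mid \GG\vee\bigvee_{n\in T}\FF_k^n]$ is correct, the strong convergence of tensor products of uniformly bounded strongly convergent projections is a true Hilbert-space fact, and in the unconditional case ($\GG$ trivial), where $L^2(\bigvee_n\FF_1^n)$ really does split as a tensor product over the coordinates, your argument proves the ``in particular'' clause.

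The genuine gap is exactly the step you set aside as bookkeeping. You need the limit projection $P^n=\lim_k P_k^n$ to be the projection attached to $\GG\vee\FF^n$, i.e.\ you need $\bigcap_k(\GG\vee\FF_k^n)=\GG\vee\FF^n$ (mod null) coordinatewise; but backward martingale convergence only identifies the limit as $\E[\,\cdot\mid \bigcap_k(\GG\vee\FF_k^n)]$, and nothing in the hypotheses controls that intersection: conditional independence of the family given $\GG$ constrains distinct coordinates against one another but is silent about the interaction of $\GG$ with any \emph{single} coordinate. Concretely, let $\xi_0,\xi_1,\dots$ be i.i.d.\ fair signs, $\GG=\sigma(\xi_1,\xi_2,\dots)$, and $\FF_k^1=\sigma(\xi_0\xi_1\cdots\xi_m : m\geq k)$. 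The products $\eta_m=\xi_0\cdots\xi_m$ are again i.i.d.\ signs, so $\FF^1=\bigcap_k\FF_k^1$ is $\P$-trivial by Kolmogorov's zero--one law; yet $\xi_0=(\xi_0\cdots\xi_k)\cdot(\xi_1\cdots\xi_k)$ is $\GG\vee\FF_k^1$-measurable for every $k$, so $\bigcap_k(\GG\vee\FF_k^1)$ contains $\sigma(\xi_0)$, while $\xi_0$ is orthogonal to $L^2(\GG\vee\FF^1)$ (a $\pi$--$\lambda$ argument, using that every $F\in\FF^1$ has probability $0$ or $1$). In your disintegrated picture the same failure reads: fibrewise, $p_S$ agrees a.s.\ with a $\bigvee_{n\in S}\FF^n$-measurable function, but the version depends on the fibre through $\P$-null sets of $\FF^n$, and fibre-dependent versions cannot in general be glued into a single $\GG\vee\bigvee_{n\in S}\FF^n$-measurable function. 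So this step is the crux, not bookkeeping. Two further remarks. First, the paper's own proof passes through the same assertion---that $\lim_k\P[A_j\mid \FF_k^j,\GG]$ is $\GG\vee\FF^j$-measurable---without further justification, so your proposal is no worse, but neither argument closes this step as written. Second, the step cannot be closed from the stated hypotheses at all: appending to the example above the degenerate coordinate $\FF_k^2\equiv\GG$ (which is conditionally independent of everything given $\GG$) gives $\FF=\bigcap_k(\FF_k^1\vee\GG)\supseteq\sigma(\xi_0)$ while $\GG\vee\FF^1\vee\FF^2=\GG\vee\FF^1$, so the asserted inclusion itself fails; repairing the lemma requires an additional hypothesis separating $\GG$ from the individual coordinates (for instance, conditional independence of $\GG$ and $\FF_1^n$ given $\FF^n$, in the spirit of von Weizs\"acker's criterion for exchanging intersections and joins of $\sigma$-fields), and any correct proof---yours or the paper's---must invoke and actually use such a hypothesis.
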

\begin{proof}
For each $n \in \N$ choose an event $A_n \in \FF_1^n$, and let $A \in \GG$. Then,
\begin{align*} \P \Big[ A \underset{j \leq N}{\bigcap} A_j | \GG, \FF_k^n : n \in \N \Big] =  \mathbbm{1}_A \underset{j \leq N}{\prod} \P[A_j | \FF_k^j, \GG]
\end{align*}
by conditional independence. By backward martingale convergence the right hand side converges to a $\GG \underset{n \in \N}{\vee} \FF^n$-measurable random variable as $k \to \infty$. Since the collection of all the events of the form $A  \underset{j \leq N}{\bigcap} A_j$ is a $\pi$-system generating $\GG \underset{n \in \N}{\vee} \FF_1^n$, we can use $\pi$-$\lambda$ arguments to show that for any $\GG \underset{n \in \N}{\vee} \FF_1^n$-measurable event $B$, we have $$ \underset{k \to \infty}{\lim} \P[B | \GG, \FF_k^n : n \in \N ] \in \GG \underset{n \in \N}{\vee} \FF^n.$$
This shows that $\GG \underset{n \in \N}{\vee} \FF_k^n$ converges to $\GG \underset{n \in \N}{\vee} \FF^n$ as $k \to \infty$. Since $\underset{n \in \N}{\vee}\FF_k^n \subseteq \GG \underset{n \in \N}{\vee} \FF_k^n$ , we can conclude that $$\underset{k\in \N}{\bigcap} (\underset{n \in \N}{\vee}\FF_k^n) \subseteq \underset{k\in \N}{\bigcap} (\GG \underset{n \in \N}{\vee} \FF_k^n) = \GG \underset{n \in \N}{\vee} \FF^n.$$ The last statement is obvious since we always have $\underset{n \in \N}{\vee}\FF^n \subseteq \underset{n \in \N}{ \vee} \FF_k^n$ for each $k \in \N$, which implies that $\underset{n \in \N}{\vee}\FF^n \subseteq \underset{k\in \N}{\bigcap}(\underset{n \in \N}{ \vee} \FF_k^n)=\FF$.
\end{proof}

\section{Proof of Proposition 2.8}\label{app: sym array}

For $\alpha \in I_K^G$, let us write $\gamma_\alpha$ for the representative of $[\alpha]_K$.

For each $C \in \AA_G$, choose $\gamma_C \in \Gamma_K^G$ with $\dom(\gamma_C)=C$, if there is any. Since $\FF_{\gamma_C}$ is countably generated, there exists a Borel-valued random variable $S_{\gamma_C}$ such that $\sigma(S_{\gamma_C})=\FF_{\gamma_C}.$ \footnote{Any countably generated $\sigma$-field can be generated by a random variable taking values on a Borel space. The obvious choice of the generator for $\FF=\sigma(A_k : k \in \N)$ would be $X=\underset{k \in \N }{\sum} 2^{-k} \mathbbm{1}_{A_k}.$} Since $S_{\gamma_C}$ is $\sigma(\bX)$-measurable, there exists a measurable function $f_C$ such that $f_C(\bX)=S_{\gamma_C}$ almost surely. For any other $\gamma \in \Gamma_K^G$ with $\dom(\gamma_C)=C$, choose $\tau \in Z_K(S_\N^{\rtimes G})$ such that $\tau \gamma = \gamma_C$ and let $$S_\gamma := f_C(\tau \bX).$$ 
Note that the choice of $\tau$ is irrelevant. For $\aa \in I_K^G$, we let
$$S_\aa : = S_{\gamma_\aa}.$$

 Let $\alpha, \beta \in I_K^G$, $\tau \in Z_K(S_\N^{\rtimes G})$ with $\alpha = \tau \beta$. We claim that $$\phi(\tau \bX) = S_\bb$$
 whenever $\phi(\bX)=S_\aa$. The left hand side represents the action of $\tau$ as we regard $S_\aa$ as an $\sigma(\bX)$-measurable random element, while on the right hand side $\tau$ acts on $\sigma(\bS)$-measurable random elements. Once we have that these actions are identical, Lemma \ref{lem: joining exchangeability 1} implies that the array $\bS$ constructed this way satisfies the desired properties.

Let $\dom(\gamma_\beta)=D$. Then, we have $S_{\gamma_\beta}=f_D(\lambda \bX)$, for $\lambda \in Z_K(S_\N^{\rtimes G})$ such that $\gamma_D = \lambda \gamma_\beta$. Considering the way we have chosen the representatives, $\gamma_\alpha$ and $\gamma_\beta$ are defined on the same domain, and hence for some $\rho$ we have $\gamma_D = \rho \gamma_\alpha $ and thus $S_{\gamma_\alpha}=f_D(\rho \bX).$

For $v \in C$, let $\kappa_1,\kappa_2 \in K$ be local isomorphisms such that $$\kappa_1 (\bb|_{C_{\kappa_1(v)}}) = \gamma_\bb|_{C_v}, \kappa_2 (\gamma_\aa|_{C_{\kappa_2\kappa_1(v)}}) = \aa|_{C_{\kappa_1(v)}}.$$ 
Then, $\rho \tau \lambda^{-1}$ fixes $\gamma_D$ at $v \in D$ since
\begin{align*} \rho \tau \lambda^{-1} (\gamma_D|_{C_v}) & = \rho \tau (\gamma_\bb|_{C_v}) = \rho \tau \kappa_1 (\bb|_{C_{\kappa_1(v)}}) \\ & = \rho \kappa_1 \tau (\bb|_{C_{\kappa_1(v)}}) = \rho \kappa_1 (\alpha|_{C_{\kappa_1(v)}}) \\ & = \rho \kappa_1 \kappa_2 (\gamma_\aa|_{C_{\kappa_2\kappa_1(v)}}) = \kappa_1 \kappa_2 \rho (\gamma_\aa|_{C_{\kappa_2\kappa_1(v)}}) \\ & = \kappa_1 \kappa_2 (\gamma_D|_{C_{\kappa_2\kappa_1(v)}}) = \gamma_D|_{C_v}.
\end{align*}
Since $v \in D$ is arbitrary, $\rho \tau \lambda^{-1}$ fixes $\gamma_D$. Thus, by definition of $\bS$, we have $$ f_D(\bX) = f_D(\rho \tau \lambda^{-1} \bX)$$ almost surely. By exchangeability the equation holds almost surely if we replace $\bX$ by $\lambda \bX$, and hence $$ S_{\gamma_\bb}=f_D(\lambda \bX)=f_D(\rho \tau \bX)$$ almost surely. 
\subsection*{Acknowledgments}
The author was supported by the National Research Foundation of Korea (NRF-2017R1A2B2001952), a National Research Foundation of Korea grant funded by the Korean Government MSIT (NRF-2019R1A5A1028324)
\bibliographystyle{alpha}
\bibliography{bib}

\newcommand{\etalchar}[1]{$^{#1}$}
\begin{thebibliography}{LOGR12}

\bibitem[Ald81]{aldous1981representations}
David~J Aldous.
\newblock Representations for partially exchangeable arrays of random
  variables.
\newblock {\em Journal of Multivariate Analysis}, 11(4):581--598, 1981.

\bibitem[AP14]{austin:panchenko:2014}
Tim Austin and Dmitry Panchenko.
\newblock A hierarchical version of the de {F}inetti and {A}ldous-{H}oover
  representations.
\newblock {\em Probability Theory and Related Fields}, 159(3-4):809--823, 2014.

\bibitem[Aus08]{austin2008exchangeable}
Tim Austin.
\newblock On exchangeable random variables and the statistics of large graphs
  and hypergraphs.
\newblock {\em Probability Surveys}, 5:80--145, 2008.

\bibitem[BRT19]{bloem2019probabilistic}
Benjamin Bloem-Reddy and Yee~Whye Teh.
\newblock Probabilistic symmetry and invariant neural networks.
\newblock {\em arXiv preprint arXiv:1901.06082}, 2019.

\bibitem[BZSL13]{bruna2013spectral}
Joan Bruna, Wojciech Zaremba, Arthur Szlam, and Yann LeCun.
\newblock Spectral networks and locally connected networks on graphs.
\newblock {\em arXiv preprint arXiv:1312.6203}, 2013.

\bibitem[CW16]{cohen2016group}
Taco Cohen and Max Welling.
\newblock Group equivariant convolutional networks.
\newblock In {\em International conference on machine learning}, pages
  2990--2999, 2016.

\bibitem[DF29]{de1929funzione}
Bruno De~Finetti.
\newblock Funzione caratteristica di un fenomeno aleatorio.
\newblock In {\em Atti del Congresso Internazionale dei Matematici: Bologna del
  3 al 10 de settembre di 1928}, pages 179--190, 1929.

\bibitem[DF37]{de1937prevision}
Bruno De~Finetti.
\newblock La pr{\'e}vision: ses lois logiques, ses sources subjectives.
\newblock In {\em Annales de l'institut Henri Poincar{\'e}}, volume~7, pages
  1--68, 1937.

\bibitem[DJ08]{diaconis2008graph}
Persi Diaconis and Svante Janson.
\newblock Graph limits and exchangeable random graphs.
\newblock {\em Rendiconti di Matematica}, 28:33--61, 2008.

\bibitem[FP{\etalchar{+}}12]{fortini2012predictive}
Sandra Fortini, Sonia Petrone, et~al.
\newblock Predictive construction of priors in bayesian nonparametrics.
\newblock {\em Brazilian Journal of Probability and Statistics},
  26(4):423--449, 2012.

\bibitem[Hof08]{hoff2008modeling}
Peter Hoff.
\newblock Modeling homophily and stochastic equivalence in symmetric relational
  data.
\newblock In {\em Advances in neural information processing systems}, pages
  657--664, 2008.

\bibitem[Hoo79]{hoover1979relations}
Douglas~N Hoover.
\newblock Relations on probability spaces and arrays of random variables.
\newblock {\em Preprint, Institute for Advanced Study, Princeton, NJ}, 2, 1979.

\bibitem[HS55]{hewitt1955symmetric}
Edwin Hewitt and Leonard~J Savage.
\newblock Symmetric measures on cartesian products.
\newblock {\em Transactions of the American Mathematical Society},
  80(2):470--501, 1955.

\bibitem[JLSY20]{jung2020generalization}
P~Jung, J~Lee, S~Staton, and H~Yang.
\newblock A generalization of hierarchical exchangeability on trees to directed
  acyclic graphs.
\newblock {\em Annales Henri Lebesgue}, 2020.

\bibitem[Kal89]{kallenberg1989representation}
Olav Kallenberg.
\newblock On the representation theorem for exchangeable arrays.
\newblock {\em Journal of Multivariate Analysis}, 30(1):137--154, 1989.

\bibitem[Kal92]{kallenberg1992symmetries}
Olav Kallenberg.
\newblock Symmetries on random arrays and set-indexed processes.
\newblock {\em Journal of Theoretical Probability}, 5(4):727--765, 1992.

\bibitem[Kal02]{kallenberg2002foundations}
Olav Kallenberg.
\newblock {\em Foundations of modern probability}.
\newblock Springer, 2002.

\bibitem[Kal05]{kallenberg2006probabilistic}
Olav Kallenberg.
\newblock {\em Probabilistic Symmetries and Invariance Principles}.
\newblock Springer, 2005.

\bibitem[LOGR12]{lloyd2012random}
James Lloyd, Peter Orbanz, Zoubin Ghahramani, and Daniel~M Roy.
\newblock Random function priors for exchangeable arrays with applications to
  graphs and relational data.
\newblock In {\em Advances in Neural Information Processing Systems}, pages
  998--1006, 2012.

\bibitem[OR14]{orbanz2014bayesian}
Peter Orbanz and Daniel~M Roy.
\newblock Bayesian models of graphs, arrays and other exchangeable random
  structures.
\newblock {\em IEEE transactions on pattern analysis and machine intelligence},
  37(2):437--461, 2014.

\bibitem[SYA{\etalchar{+}}17]{statonetal2017pps}
Sam Staton, Hongseok Yang, Nathanael~L. Ackerman, Cameron Freer, and Daniel~M
  Roy.
\newblock Exchangeable random process and data abstraction.
\newblock In {\em Workshop on Probabilistic Programming Semantics (PPS 2017)},
  2017.

\end{thebibliography}
\end{document}